\documentclass[submission,copyright]{eptcs}
\usepackage{url}
\usepackage{amsfonts}
\usepackage{amsmath}
\usepackage{amsthm}
\usepackage{amssymb}
\usepackage{latexsym}
\usepackage{graphicx}
\usepackage{graphicx}
\usepackage{xspace}
\usepackage{verbatim}
\usepackage{ifthen}
\usepackage{hyperref}

\usepackage{tikz}
\usepackage{tikzfig}    
\usepackage{tikzcdiag}  

\usetikzlibrary{trees}
\usetikzlibrary{topaths}
\usetikzlibrary{decorations.pathmorphing}
\usetikzlibrary{decorations.markings}
\usetikzlibrary{snakes,matrix,backgrounds,folding}
\usetikzlibrary{chains,scopes,positioning,fit}
\usetikzlibrary{arrows,shadows}
\usetikzlibrary{calc}
\usetikzlibrary{chains}
\usetikzlibrary{shapes,shapes.geometric,shapes.misc}

\pgfsetlayers{background,edgelayer,nodelayer,main}

\tikzstyle{small map edge}=[|->, thick]

\tikzstyle{string graph}=[scale=0.6]
\tikzstyle{sg diredge}=[-stealth]
\tikzstyle{wire label}=[font=\footnotesize, auto]
\tikzstyle{sg vertex}=[circle,minimum width=2.2mm,fill=white,draw=black,inner sep=0mm]
\tikzstyle{labelled sg vertex}=[circle,minimum width=7mm,fill=white,draw=black,inner sep=0mm]
\tikzstyle{sg grey vertex}=[sg vertex,fill=gray!30!white]
\tikzstyle{sg black vertex}=[sg vertex,fill=black]
\tikzstyle{sg bold vertex}=[circle,minimum width=2.2mm,fill=white,draw=black,very thick,inner sep=0mm]
\tikzstyle{sg wire vertex}=[circle,minimum width=1mm,fill=black,inner sep=0mm]

\tikzstyle{bbox edge}=[draw=blue]
\tikzstyle{bbox include}=[->,draw=blue]
\tikzstyle{bbox corner}=[inner sep=0pt,rectangle,fill=blue,draw=blue,minimum width=1.5mm,minimum height=1.5mm]

\tikzstyle{dotpic}=[scale=0.6]
\tikzstyle{dot}=[inner sep=0.7mm,minimum width=0pt,minimum height=0pt,fill=black,draw=black,shape=circle]
\tikzstyle{white dot}=[dot,fill=white]
\tikzstyle{alt white dot}=[white dot,label={[xshift=2.9mm,yshift=-0.1mm]left:$\cdot$}]
\tikzstyle{gray dot}=[dot,fill=gray!50]

\tikzstyle{diredge}=[->]
\tikzstyle{braceedge}=[decorate,decoration={brace,amplitude=2mm,raise=-1mm}]
\tikzstyle{small braceedge}=[decorate,decoration={brace,amplitude=1mm,raise=-1mm}]

\tikzstyle{square box}=[rectangle,fill=white,draw=black,minimum height=6mm,minimum width=6mm]

\newcommand{\catSet}{\ensuremath{\mathbf{Set}}\xspace}

\newcommand{\catSGraph}{\ensuremath{\mathbf{SGraph}}\xspace}
\newcommand{\catSPatGraph}{\ensuremath{\mathbf{SPatGraph}}\xspace}
\newcommand{\catGraph}{\ensuremath{\mathbf{Graph}}\xspace}

\newcommand{\slicecat}[2]{#1 / #2}
\newcommand{\catGraphSlice}{\ensuremath{\slicecat{\catGraph}{\mathcal{G}_2}}}


\DeclareMathOperator{\DROP}{DROP}
\DeclareMathOperator{\KILL}{KILL}
\DeclareMathOperator{\MERGE}{MERGE}
\DeclareMathOperator{\COPY}{COPY}
\DeclareMathOperator{\PDROP}{PDROP}
\DeclareMathOperator{\PKILL}{PKILL}
\DeclareMathOperator{\PMERGE}{PMERGE}
\DeclareMathOperator{\PCOPY}{PCOPY}




\newlength{\hookrightarrowwidth}
\settowidth{\hookrightarrowwidth}{$\hookrightarrow$}%

\DeclareMathOperator{\matches}{%
\makebox[\hookrightarrowwidth][c]{$\hookrightarrow$}
\hspace*{-\hookrightarrowwidth}%
\makebox[\hookrightarrowwidth][c]{\raise0.5mm\hbox{$\, ^{\sim}$}}%
}

\DeclareMathOperator{\leftmatches}{%
\makebox[\hookrightarrowwidth][c]{$\hookleftarrow$}
\hspace*{-\hookrightarrowwidth}%
\makebox[\hookrightarrowwidth][c]{\raise0.5mm\hbox{$\, ^{\sim}$}}%
}

\DeclareMathOperator{\notleftmatches}{%
\makebox[\hookrightarrowwidth][c]{$\hookleftarrow$}
\hspace*{-\hookrightarrowwidth}%
\makebox[\hookrightarrowwidth][c]{\raise0.5mm\hbox{$\, ^{\sim}$}}%
\hspace*{-\hookrightarrowwidth}%
\makebox[\hookrightarrowwidth][c]{\raise-0.2mm\hbox{\footnotesize{$/$}}}%
}

\newcommand{\graphminus}{\setminus}
\newcommand{\emptygraph}{\varnothing}

\newcommand{\In}{\textrm{In}}
\newcommand{\Out}{\textrm{Out}}
\newcommand{\Bound}{\textrm{Bound}}


\newcommand{\cmdrewritesto}{\tikz[baseline=-0.25em] { \draw [-open triangle 45, line width=0.2pt] (0,0) -- (0.5,0); }\,}

\DeclareMathOperator{\rewritesto}{\cmdrewritesto}










\theoremstyle{plain} 
\newtheorem{theorem}{Theorem}[section]
\newtheorem{proposition}[theorem]{Proposition}
\newtheorem{lemma}[theorem]{Lemma}

\theoremstyle{definition}
\newtheorem{definition}[theorem]{Definition}
\newtheorem{definitions}[theorem]{Definitions}

\newtheorem{example}[theorem]{Example}





\newcommand{\nulp}[1]{}



\title{Pattern graph rewrite systems}

\author{
Aleks Kissinger
\institute{Department of Computer Science\\
University of Oxford, United Kingdom}
\email{alexander.kissinger@cs.ox.ac.uk}
\and
Alex Merry
\institute{Department of Computer Science\\
University of Oxford, United Kingdom}
\email{alex.merry@cs.ox.ac.uk}
\and
Matvey Soloviev
\institute{Computer Laboratory\\
University of Cambridge, United Kingdom}
\email{ms900@cam.ac.uk}
}


\date{Draft: \today}



\begin{document}

\maketitle

\begin{abstract}
String diagrams are a powerful tool for reasoning about physical processes,
logic circuits, tensor networks, and many other compositional structures.
Dixon, Duncan and Kissinger introduced \emph{string graphs}, which are a
combinatoric representations of string diagrams, amenable to automated
reasoning about diagrammatic theories via graph rewrite systems. In this
extended abstract, we show how the power of such rewrite systems can be
greatly extended by introducing \emph{pattern graphs}, which provide a means
of expressing infinite families of rewrite rules where certain marked
subgraphs, called $!$-boxes (``bang boxes''), on both sides of a rule can be
copied any number of times or removed. After reviewing the string graph
formalism, we show how string graphs can be extended to pattern graphs and how
pattern graphs and pattern rewrite rules can be instantiated to concrete
string graphs and rewrite rules. We then provide examples demonstrating the
expressive power of pattern graphs and how they can be applied to study
interacting algebraic structures that are central to categorical quantum
mechanics.
\end{abstract}

\section{Introduction}
\label{sec:intro}

String diagrams consist of a collection of \textit{boxes} representing processes with some inputs and outputs, and \textit{wires}, representing the composition of these processes.
\ctikzfig{kissinger_tensor_diagram}

They were introduced by Penrose in 1971 to describe (abstract) tensor networks~\cite{Penrose1971}, but were later shown to be a much more general tool for expressing morphisms in arbitrary monoidal categories. Joyal and Street showed in 1991 that string diagrams could be formalised as topological graphs carrying extra structure and used to construct \textit{free} (symmetric, braided, traced, etc.) monoidal categories~\cite{JS}. As such, they are a powerful tool for reasoning about algebraic structures \textit{internal} to monoidal categories, like those employed by Abramsky and Coecke's program of \textit{categorical quantum mechanics}~\cite{AC2004,Coecke2008,CoeckeKissinger2010,Coecke:2008p681}.

However, while they provide an intuitive, geometric notion of a composed
process, topological graphs are unwieldy to manipulate by computer program. To
solve this problem, Dixon, Duncan and Kissinger introduced a discrete version
of string diagrams, called \textit{string graphs}~\cite{Dixon2010}. The key
difference is that ``wires'', which in the Joyal and Street construction are
represented by copies of the real interval $[0,1]$, are replaced by chains of
special vertices called \textit{wire-vertices}.
\ctikzfig{kissinger_wire}

Using string graphs, we can reason about algebraic structures in monoidal categories automatically using double-pushout graph rewriting~\cite{Ehrig1973}. This translation allows many techniques to be imported with very little change from term rewriting literature into the study of graphical calculi. However, in the course of applying graph rewrite systems, there are certain circumstances where a finite set of graph rewrite rules does not suffice. For instance, in~\cite{Coecke2008} the authors focused on the study of how classical data (in this case, data associated with measurement outcomes) propagates through a quantum system. This relies crucially on so-called ``spiders''. The distinguishing feature they highlighted about classical, as opposed to quantum, data is that it can be freely created, compared, copied, or deleted. They represent any combination of these operations as a \textit{spider}, with a crucial identity, called the \textit{spider law}, which says that connected spiders fuse together.
\begin{equation}\label{eq:spider-comp}
  \begin{tikzpicture}[string graph]
	\begin{pgfonlayer}{nodelayer}
		\node [style=none] (0) at (-4, 1.25) {};
		\node [style=none] (1) at (-3.5, 1.25) {...};
		\node [style=none] (2) at (-3, 1.25) {};
		\node [style=none] (3) at (-1.5, 1.25) {};
		\node [style=none] (4) at (-1, 1.25) {...};
		\node [style=none] (5) at (-0.5, 1.25) {};
		\node [style=none] (6) at (0.75, 1.25) {};
		\node [style=none] (7) at (1.5, 1.25) {};
		\node [style=none] (8) at (2.75, 1.25) {};
		\node [style=none] (9) at (2, 0.75) {...};
		\node [style=white dot] (10) at (-3.25, 0.5) {};
		\node [style=none, font=\footnotesize] (11) at (-2.5, 0) {...};
		\node [style=none] (12) at (0, 0) {$\rewritesto$};
		\node [style=white dot] (13) at (1.75, 0) {};
		\node [style=white dot] (14) at (-1.75, -0.5) {};
		\node [style=none] (15) at (2, -0.75) {...};
		\node [style=none] (16) at (-4.5, -1.25) {};
		\node [style=none] (17) at (-4, -1.25) {...};
		\node [style=none] (18) at (-3.5, -1.25) {};
		\node [style=none] (19) at (-2, -1.25) {};
		\node [style=none] (20) at (-1.5, -1.25) {...};
		\node [style=none] (21) at (-1, -1.25) {};
		\node [style=none] (22) at (0.75, -1.25) {};
		\node [style=none] (23) at (1.5, -1.25) {};
		\node [style=none] (24) at (2.75, -1.25) {};
		\node [style=none] (25) at (-6.25, 0.5) {};
		\node [style=none] (26) at (-4, 0.5) {};
		\node [style=none] (27) at (-2.5, -0.75) {};
		\node [style=none] (28) at (-6.25, 0) {};
		\node [style=none, anchor=east] (29) at (-6.5, 0.25) {\small\color{gray}spiders};
	\end{pgfonlayer}
	\begin{pgfonlayer}{edgelayer}
		\draw [style=diredge, bend left=15] (22.center) to (13);
		\draw [style=diredge, bend left=15] (19.center) to (14);
		\draw [style=diredge, bend right=15] (10) to (2.center);
		\draw [style=diredge, in=-60, out=-165] (14) to (10);
		\draw [style=diredge, in=15, out=116] (14) to (10);
		\draw [style=diredge, bend right=15] (18.center) to (10);
		\draw [style=diredge, bend left=15] (14) to (3.center);
		\draw [style=diredge, bend right=15] (13) to (8.center);
		\draw [style=diredge, bend right=15] (21.center) to (14);
		\draw [style=diredge, bend left=15] (23.center) to (13);
		\draw [style=diredge, bend right=15] (24.center) to (13);
		\draw [style=diredge, bend left=15] (10) to (0.center);
		\draw [style=diredge, bend right=15, looseness=0.75] (14) to (5.center);
		\draw [style=diredge, bend left=15, looseness=0.75] (16.center) to (10);
		\draw [style=diredge, bend left=15] (13) to (6.center);
		\draw [style=diredge, bend left=15] (13) to (7.center);
		\draw [style=diredge, draw=gray, bend left=15, looseness=0.75] (25.center) to (26.center);
		\draw [style=diredge, draw=gray, in=-150, out=-16, looseness=0.50] (28.center) to (27.center);
	\end{pgfonlayer}
\end{tikzpicture}}
\end{equation}

This rule succinctly sums up an infinite family of rules, namely one for every arity of the two spiders involved. However, the use of ellipses is part of the meta-language, rather than the diagram itself. What we aim to do is replace this informal notion with diagrammatic syntax. We do this by introducing \textit{pattern graphs}. Pattern graphs contain one or more labelled subgraphs called $!$-boxes. To instantiate a pattern graph, the contents of its $!$-boxes (along with any edges in or out) can be copied 0 or more times. So, a single pattern graph represents an infinite family of concrete graphs.
\ctikzfig{kissinger_bang_graph_ex}

If two pattern graphs have coinciding !-boxes, we can form them into pattern rewrite rules. For instance, the spider law can be reformulated:
\ctikzfig{kissinger_spiderpattern}

This presents \eqref{eq:spider-comp} in a manner that is machine-readable. Also note that in the process of formulating this rule, we have removed an ambiguity on the LHS. Namely, we wish to have zero or more wires as inputs and outputs to the two spiders, yet we need one or more wires connecting the two spiders for the equation to hold.

Dixon and Duncan have previously~\cite{dixon2009} introduced a notion of
pattern graphs using !-boxes. However, the underlying graph formalism, which
did without (internal) wire-vertices, was ill-behaved with respect to the
interpretation of the graphs as morphisms in a monoidal category. This
extended abstract extends that work in three important ways. Firstly, it
formalises the notions of pattern graph, pattern graph instantiation, and
pattern rewriting in the context of string graphs, which were proven in
\cite{Dixon2010} to be sound and complete with respect to their interpretation
as morphisms in monoidal categories. Secondly, it shows that the latter two
operations are sound and consistent with respect to the interpretation of
string graphs as morphisms in a monoidal category. Thirdly, it extends Dixon
and Duncan's origin notion of a pattern graph by allowing edges to be repeated
(via wire-vertices in !-boxes) and it increases the expressiveness of the
language by allowing !-boxes to nest and overlap. This allows the expression
of previously unexpressible equivalences, such as the \emph{path-counting}
rule,
\ctikzfig{kissinger_pathcounting}
which can now be formalised as follows:
\ctikzfig{kissinger_pathcounting_b}

The rest of the paper is structured as follows. In section~\ref{sec:string-graphs}, we briefly review the category of string graphs. In section~\ref{sec:pattern-graphs}, we define pattern graphs and the method by which pattern graphs can be instantiated to concrete graphs. In section~\ref{sec:rw-pattern-graphs}, we show how this can be extended to pattern graph rewrite rules and show how pattern rules can be matched and applied to concrete string graphs. Finally, we conclude and discuss future work in section~\ref{sec:conclusion}.

\section{Related work} 
\label{sec:related}

As already mentioned, this work improves upon the specification of $!$-boxes
in~\cite{dixon2009}. The original inspiration for the term ``$!$-box'' in that
paper is the ``bang'' operation from classical linear logic (CLL) introduced
by Girard~\cite{Girard1987}. Its interpretation in that context is a logical
expression that can be ``consumed'' any number of times in the course of the
proof.

Lafont introduced an alternative, and more flexible, 2-dimensional
calculus~\cite{Lafont95}.  It does not rely on symmetry, or on traced or
compact structure, but this also makes it harder to work with as these
properties allow us to do genuine graph rewriting.

Researchers at Twente introduced two ways by which richer families of graphs could be matched and rewritten using something akin to pattern graphs. The first method, initiated by Rensink, uses quantified graph transformation rules, where subgraphs are attached to a tree of alternating quantifiers~\cite{Rensink2006,Rensink2009}. Unlike the transformation rules we consider, this method allows matchings to be non-full on all vertices in a pattern graph, so an edge in the pattern can be interpreted as an existentially-quantified statement on the attached subgraph, rather than a requirement that all incident edges must be matched. Rensink showed that such statements could be generalised to include negations, universals, and nested quantifiers.

The second method takes inspiration from abstraction/refinement-style model checking. Using graph abstraction~\cite{Boneva2007}, large or infinite families of graphs can be represented using coarse-grained abstract graphs. While this often has the side-effect of producing abstract graphs that match many more graphs than those of interest, it has the useful property that any high-level properties proven about the abstract graph hold for any concrete graph it represents.

Both of these methods are implemented on the GROOVE platform, which is a general-purpose graph rewriting tool geared toward model-checking~\cite{GROOVE}.

\section{The category of string graphs} 
\label{sec:string-graphs}

We recall the definition of \emph{string graphs}, introduced using the name open-graphs in
\cite{Dixon2010}.

String diagrams can have wires that are not connected to vertices at one or both ends and wires that are connected to themselves to form circles. As we mentioned in section~\ref{sec:intro}, we cope with these situations by replacing wires with chains of special place-holder vertices called \textit{wire-vertices}. The other type of vertices in a string graph are called \textit{node-vertices}, which should be considered the ``logical'' vertices of a diagram, and are used to represent some operation, process, or morphism.
We now provide some basic definitions in order to fix graph notation.

\begin{definition}
  Let $\catGraph$ be the category of graphs. It is defined as the functor
  category $[\mathbb G, \catSet]$, for $\mathbb G$ defined as:
  \begin{center}
	\cpair{E}{V}{s}{t}
  \end{center}
  $E$ identifies the edges of the graph, and $V$ the vertices.
  $s$ and $t$ are functions taking an edge to its source and target
  respectively.
\end{definition}

If $t(e) = v$ then $e$ is called an \emph{in-edge} of $v$ and if $s(e) = v$ then $e$ is called an \emph{out-edge} of $v$. If $v'$ is the target of one of the out-edges of $v$, it is called a \textit{successor} of $v$. Similarly, if $v'$ is the source of one of the in-edges of $v$, it is called a \textit{predecessor} of $v$. We denote the set of all successors and predecessors for a given vertex $v$ as $\textrm{succ}(v)$ and $\textrm{pred}(v)$, respectively.

We shall often make use of the graph-theoretic subtraction. For a subgraph $H$
of $G$, let $G \graphminus H$ be the largest subgraph of $G$ that is disjoint
from $H$.

The typegraph $\mathcal G_2$ will be used to distinguish node-vertices from wire-vertices.
\ctikzfig{kissinger_typegraph_g2}

\begin{definition}[$\catSGraph$]
The category $\catSGraph$ of \emph{string graphs} is the full subcategory of
the slice category $\catGraphSlice$ induced by the objects where
each wire-vertex has at most one in-edge and one out-edge.
\end{definition}

This slice construction allows string graphs to be represented as graphs with a
typing morphism to $\mathcal G_2$. We refer to a single chain of wire-vertices
as a \textit{wire}. The slice construction also ensures that every path
between two node-vertices must be connected by a wire containing at least one wire-vertex. This is important both for the concept of matching and for the case where the wire-vertex carries type information about the wire.

\begin{example}
  A diagrammatic presentation of a string graph:
  \ctikzfig{kissinger_sgraph_ex}
\end{example}

\begin{definitions}[$\catSGraph$ Notation]
  If a wire-vertex has no in-edges, it is called an \emph{input}. We write the set of inputs of a string graph $G$ as $\In(G)$. Similarly, a wire-vertex with no out-edges is called an \emph{output}, and the set of outputs is written $\Out(G)$. The inputs and outputs define a string graph's \emph{boundary}, $\Bound(G) := \In(G) + \Out(G)$. If a boundary point has no in-edges and no out-edges, (it is both and input and output) it is called an \emph{isolated point}. A string graph consisting of only isolated points is called a \emph{point-graph}.
\end{definitions}

\nulp{\begin{fullproof}
\begin{lemma}
	\label{lemma:sg-morphism-bounds}
	If $f:G \rightarrow H$ is a morphism in $\catSGraph$ and $v$ is a vertex in
	$G$ that maps to an input (respectively output) of $H$ under $f$, then $v$
	must be an input (respectively output).
\end{lemma}
\begin{proof}
	Suppose $e$ is an edge of $G$ such that $t_G(e) = v$.  Then
	$t_H(f(e))$ must be $f(v)$, and so $f(v)$ cannot be an input unless
	$v$ is.  The output case is symmetric.
\end{proof}
\end{fullproof}}

These definitions can be easily extended to handle multiple
node-vertex and wire types by using a richer typegraph. In general, one can
turn any monoidal signature $T$ into a typegraph $\mathcal G_T$ and use
$\mathcal G_T$-typed graphs to construct the free (traced symmetric) monoidal
category over the signature $T$. For details, see~\cite{DixonKissinger2010}
or~\cite{aleksThesis}. However, for the main ideas in the coming sections, it
suffices to consider string graphs with a single node-vertex and wire type.

\section{Pattern graphs and instantiation} 
\label{sec:pattern-graphs}

Before proceeding to the notion of $!$-boxes, it is useful to first define an
\textit{open subgraph} of a string graph. Intuitively, these are full
subgraphs that contain only complete wires. One way to say this is the
graph-theoretic subtraction does not create any new boundaries.

\begin{definition}
	A subgraph $O$ of a string graph $G$ is said to be \textit{open} if
	$\In(G\graphminus O) \subseteq \In(G)$ and
	$\Out(G\graphminus O) \subseteq \Out(G)$.
\end{definition}

\begin{comment}
\begin{proposition}
	A subgraph $O$ of a string graph $G$ is open if and only if
	$\In(G\graphminus O) = \In(G)\graphminus\In(O)$ and
	$\Out(G\graphminus O) = \Out(G)\graphminus\In(O)$.
\end{proposition}
\nulp{\begin{fullproof}
\begin{proof}
	Suppose $i \in \In(G)$.  If it is in $O$, it must also be in $\In(O)$,
	so if it is in $\In(G)\graphminus\In(O)$, it cannot be in $O$ at all.
	But then it is in $G\graphminus O$, which is a subgraph of $G$, and it
	is an input of $G$, so it must be an input of $G\graphminus O$.  So
	$\In(G)\graphminus\In(O) \subseteq \In(G\graphminus O)$.

	Now suppose $i \in \In(G\graphminus O)$.  Then $i$ is not in $O$, and so
	not in $\In(O)$.  So $i$ is in $\In(G)\graphminus\In(O)$ if and only if
	$i$ is in $\In(G)$, and hence $\In(G\graphminus O) =
	\In(G)\graphminus\In(O)$ if and only if $\In(G\graphminus O) \subseteq
	\In(G)$.
	
	The case for outputs follows similarly.
\end{proof}
\end{fullproof}}
\end{comment}

We shall shortly define $!$-boxes as certain kinds of open subgraphs, and note
that openness is important to preserve the property of being a string graph
(i.e., no branching wires) when $!$-boxes are copied. The following proposition
justifies the use of the topological term ``open''.

\begin{proposition}
	\label{prop:open-subgraph-props}
	If $O,O' \subseteq G$ are open subgraphs, and $H \subseteq G$ is an
	arbitrary subgraph, then $O \cap O'$ and $O \cup O'$ are open in $G$
	and $H \cap O$ is open in $H$.
\end{proposition}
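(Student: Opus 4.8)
The plan is to reduce openness to a purely local condition on the edges that cross the boundary of a subgraph, and then verify that this condition is inherited under the three operations by elementary membership arguments.

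First I would unwind the definition of $G \graphminus O$. Since this is the largest subgraph of $G$ disjoint from $O$, it is the full subgraph on the vertex set $V(G) \graphminus V(O)$; in particular $\In(G \graphminus O)$ and $\Out(G \graphminus O)$ depend only on $V(O)$, not on the edge set of $O$. A wire-vertex $v \notin O$ acquires no in-edge in $G \graphminus O$ exactly when its unique in-edge in $G$ (wire-vertices have at most one) has its source inside $O$; hence the requirement $\In(G \graphminus O) \subseteq \In(G)$ is equivalent to saying that there is no edge $e$ with $s(e) \in V(O)$ and $t(e)$ a wire-vertex lying outside $O$. Dualising for outputs, I would record the following characterisation as a short lemma: $O$ is open in $G$ iff $G$ contains no edge whose source is in $O$ and whose target is a wire-vertex outside $O$, and no edge whose target is in $O$ and whose source is a wire-vertex outside $O$. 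I will call such an edge \emph{boundary-violating} for $O$.

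Given this characterisation, each of the three claims becomes a one-line case analysis, using that $V(O \cap O') = V(O) \cap V(O')$ and $V(O \cup O') = V(O) \cup V(O')$ (the edge sets being irrelevant to openness). For $O \cup O'$, a boundary-violating edge of type ``source in, target out'' has $s(e) \in V(O) \cup V(O')$, so $s(e)$ lies in $V(O)$ or $V(O')$, while $t(e) \notin V(O) \cup V(O')$ forces $t(e)$ out of that same subgraph; thus the edge is already boundary-violating for $O$ or for $O'$, a contradiction. For $O \cap O'$, a source in $V(O) \cap V(O')$ lies in both subgraphs, whereas a target outside the intersection lies outside at least one of them, again yielding a violation for $O$ or $O'$. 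The target-in/source-out cases are symmetric. For $H \cap O$ regarded inside $H$, I would note that every edge of $H$ is an edge of $G$ and that the wire-vertex/node-vertex typing is inherited from the typing morphism to $\mathcal{G}_2$; so any boundary-violating edge for $H \cap O$ in $H$, whose source lies in $V(H) \cap V(O) \subseteq V(O)$ and whose target is a wire-vertex in $V(H) \graphminus V(O) \subseteq V(G) \graphminus V(O)$, is boundary-violating for $O$ in $G$, contradicting openness of $O$.

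The only real obstacle is getting the characterisation lemma exactly right: one must remember that the condition quantifies over \emph{wire-vertex} targets (resp.\ sources) only, and that $G \graphminus O$ is determined by $V(O)$ alone. Once this is pinned down, all three cases are immediate, since openness of the constructed subgraph is equivalent to the absence of boundary-violating edges, and any such edge is traced back to one already forbidden by the hypotheses.
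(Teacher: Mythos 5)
Your proof is correct, and it takes a genuinely different decomposition from the paper's. The paper proves the three claims separately, each time taking $i \in \In(G\graphminus X)$ for the relevant $X$ and chasing a hypothetical in-edge to a contradiction, routing through the intermediate fact $i \in \In(G\graphminus O)$ so that openness of $O$ can be applied; it never states your characterisation lemma. Factoring the common edge-chasing into the ``boundary-violating edge'' criterion, as you do, makes the three cases collapse into membership checks, and it has a further payoff: it makes explicit exactly where the string-graph condition (each wire-vertex has at most one in-edge and one out-edge) enters, namely in the direction ``open $\Rightarrow$ no boundary-violating edge''. That dependency is real, not bookkeeping: in an arbitrary typed graph, a wire-like vertex outside $O \cup O'$ with one in-edge from $O$ and another from $O'$ would make the union case fail, and the paper's proof of that case silently invokes uniqueness of the in-edge at the step ``then $i$ must be in $\In(G\graphminus O)$'' (its intersection and restriction cases happen not to need it). So your lemma additionally explains why the proposition is special to string graphs, consonant with the paper's closing remark that ``open subgraphs'' do not translate to arbitrary typed graphs. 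Your two supporting observations --- that $G\graphminus O$ is the full subgraph on the complementary vertex set because every edge of $O$ has both endpoints in $O$, so openness depends only on $V(O)$, and that the characterisation must quantify over wire-vertex targets (resp.\ sources) only --- are precisely the points that need care, and you handle both correctly.
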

\nulp{\begin{fullproof}
\begin{proof}
	The cases for inputs and outpus are symmetric in all cases, so we
	will only consider inputs.

	Let $i \in \In(G\graphminus (O\cap O'))$.  Clearly $i$ cannot be in both
	$O$ and $O'$; WLOG suppose it is not in $O$.  Suppose $i$ has an
	in-edge $e$ in $G\graphminus O$.  Then the source of $e$ cannot be in
	$O$, and hence not in $O\cap O'$.  But then $e$ is in $G\graphminus
	(O\cap O')$, which contradicts our original assumption about $i$.  So
	$i \in \In(G\graphminus O)$, and therefore in $\In(G)$ by openness of
	$O$.  So $O\cap O'$ is open in $G$.

	Let $i \in \In(G\graphminus (O\cup O'))$.  Clearly $i$ is neither in $O$
	nor $O'$.  Suppose $i \notin \In(G)$, and so it has an in-edge $e$ in
	$G$.  Then the source of $e$ must be in either $O$ or $O'$, since
	otherwise $e$ would be in $G \graphminus (O\cup O')$.  WLOG, suppose it
	is in $O$.  Then $i$ must be in $\In(G\graphminus O)$.  But $O$ is open,
	so $i \in \In(G)$, which is a contradiction.  So $i \in \In(G)$ and
	$O\cup O'$ is open in $G$.

	Let $i \in \In(H\graphminus (H\cap O))$.  Then $i$ is in $G\graphminus O$.
	If $i$ is an input in $G\graphminus O$, $i$ is in $\In(G)$, since $O$ is
	open in $G$, and hence in $\In(H)$, as $i$ is in H.  Otherwise,
	$i$ has an in-edge $e$ in $G\graphminus O$.  The source of $e$ cannot be
	in $O$.  But then it also cannot be in $H$, since otherwise it would
	be in $H\graphminus (H\cap O)$, which would contradict our original
	assumption about $i$.  So $i \in \In(H)$, and $H\cap O$ is open in
	$H$.
\end{proof}
\end{fullproof}}

We encode $!$-boxes into the graph structure itself, by introducing a third vertex type, called a \textit{$!$-vertex}. The extended typegraph $\mathcal G_3$ looks like this:
\ctikzfig{kissinger_typegraph_g3}

Note that the typegraph enforces that $!$-vertices can only have out-edges or
edges coming from other $!$-vertices. For a $\mathcal G_3$-typed graph
$(G,\tau)$, we write $\eta(G)$, $\omega(G)$, and $!(G)$ as shorthand for the
preimages $\tau^{-1}(\eta)$, $\tau^{-1}(\omega)$, and $\tau^{-1}(!)$
respectively.  We alter the definition of an \emph{input} slightly from the
string-graph case, due to the new vertex type: a wire-vertex is an input if
the only in-edges are from $!$-vertices.

For a $!$-vertex $b \in !(G)$, let $B(b)$ be its associated $!$-box. This is
the full subgraph whose vertices are the set $\textrm{succ}(b)$ of all of the
successors of $b$. We also define the parent graph of a $!$-vertex
$B^\uparrow(b)$ as the full subgraph of predecessors, that is,
the full subgraph generated by $\textrm{pred}(b)$.

\begin{definition}
	A $\mathcal G_3$-typed graph $G$ is called a \textit{pattern graph} if:
	\begin{enumerate}
		\item the full subgraph with vertices $\eta(G) \cup
		\omega(G)$, denoted $\Sigma(G)$, is a string graph,
		\item the full subgraph with vertices $!(G)$, denoted
			$\beta(G)$, is posetal,
		\item for all $b \in\, !(G)$, $B(b)$ is an open subgraph of $G$, and
		\item for all $b,b'\! \in\, !(G)$, if $b'\! \in B(b)$ then $B(b') \subseteq B(b)$.
	\end{enumerate}
	Let $\catSPatGraph$ be the full subcategory of $\catGraph/\mathcal G_3$ whose objects are pattern graphs.
\end{definition}

Recall that a graph is posetal if it is simple (at most one edge between any
two vertices) and, when considered as a relation, forms a partial order. Note
in particular that this implies $b \in B(b)$ (and $B^\uparrow(b)$), by
reflexivity. This partial order allows $!$-boxes to be nested inside each
other, provided that the subgraph defined by a nested $!$-vertex is totally
contained in the subgraph defined by its parent (condition 4).

We extend the $\Sigma(G)$ and $\beta(G)$ notation to morphisms of
$\catSPatGraph$ by making their operation be the obvious restrictions.  Thus
$\Sigma$ and $\beta$ can be viewed as functors on $\catSPatGraph$.

\begin{definition}
	A pattern graph with no $!$-vertices is called a \textit{concrete
	graph}.
\end{definition}

Note that the full subcategory of $\catSPatGraph$ consisting of concrete
graphs is isomorphic to $\catSGraph$, and there is an obvious canonical
isomorphism.  Concrete graphs and string graphs will therefore be considered
interchangable.

We introduce special notation for pattern graphs. $!$-vertices are drawn as
squares, but rather than drawing edges to all of the node-vertices and
wire-vertices in $B(b)$, we simply draw a box around it.
\ctikzfig{kissinger_nested_def}

In this notation, we retain edges between distinct $!$-vertices to indicate
which $!$-boxes are nested as opposed to simply overlapping. This distinction
is important, as nested $!$-boxes are copied whenever their parent is copied.
\ctikzfig{kissinger_nested_v_overlap}

In particular, every object in $\catSGraph$ can be considered as a pattern graph that has no $!$-vertices. This embedding $E: \catSGraph \hookrightarrow \catSPatGraph$ is full and coreflective. Its right adjoint is given by the forgetful functor $U : \catSPatGraph \rightarrow \catSGraph$ that drops all of the $!$-boxes.

\subsection{Instantiation} 
\label{sec:pg-instantiation}

Following the ``bang'' operation from linear logic, $!$-boxes admit 4 operations.
\ctikzfig{kissinger_four_ops}

\begin{definitions}\label{def:bbox-ops}
	For $G$ a pattern graph, and $b,b'\! \in\, !(G)$ where
	$B^\uparrow(b)\graphminus b = B^\uparrow(b')\graphminus b'$ and $B(b) \cap
	B(b') = \{ \}$, the four $!$-box operations are defined as follows:
	\begin{description}
		\item $\COPY_b(G)$ is defined by a pushout of inclusions in $\catGraph/\mathcal G_3$:
		\begin{equation}
			\label{eq:copy-pushout}
			\posquare{G\graphminus B(b)}{G}{G}{\COPY_b(G)}{}{}{}{}
		\end{equation}
		\item $\DROP_b(G) := G\graphminus b$.
		\item $\KILL_b(G) := G\graphminus B(b)$.
		\item $\MERGE_{b,b'}(G)$ is a quotient of $G$ where
		$B^\uparrow(b)$ and $B^\uparrow(b')$ are identified. More
		explicitly, this is the coequaliser
		\begin{equation}
			\label{eq:merge-diagram}
			\begin{tikzpicture}
			    \matrix(m)[cdiag]{
			    B^\uparrow(b) & G & \MERGE_{b,b'}(G') \\};
			    \path [arrs] (m-1-1) edge [arrow above] node {$\hat b$} (m-1-2)
			                 (m-1-1) edge [arrow below] node [swap] {$\hat b'$} (m-1-2)
			                 (m-1-2) edge (m-1-3);
			\end{tikzpicture}
		\end{equation}
		in $\catGraph/\mathcal G_3$ where $\hat b$ is the normal
		inclusion map and $\hat b'$ is the inclusion of
		$B^\uparrow(b')$ into $G$ composed with the obvious
		isomorphism from $B^\uparrow(b)$ to $B^\uparrow(b')$.
	\end{description}
\end{definitions}

Note that all of these operations preserve the property of being a pattern graph.

\nulp{\begin{fullproof}
\begin{lemma}
	\label{lemma:pattgraph-full-subgraph}
	Let $G$ be a pattern graph and $H$ a full subgraph of $G$.  Then $H$ is a
	pattern graph.
\end{lemma}
\begin{proof}
	$\Sigma(H)$ is a string graph as it is a subgraph of $\Sigma(G)$,
	which is a string graph.

	Let $G_!$ be the full subgraph of $G$ with vertices $!(G)$, and
	similarly let $H_!$ be the full subgraph of $H$ with vertices $!(H)$.
	Then $H_!$ is the full subgraph of $G$ with vertices $!(H)$, and hence
	is a full subgraph of $G_!$.  But a full subgraph of a posetal graph
	is itself posetal, so $H_!$ is posetal.

	Let $b \in !(H)$.  Then $B_G(b)$ is an open subgraph of $G$, and
	$B_H(b) = B_G(b)\cap H$, which is open in $H$ by proposition
	\ref{prop:open-subgraph-props}.

	Let $b,b' \in !(H)$, with $b' \in B_H(b)$.  Then $b' \in B_G(b)$, and
	$B_G(b') \subseteq B_G(b)$.  But then $B_H(b') = B_G(b')\cap H
	\subseteq B_G(b)\cap H = B_H(b)$.

	So $H$ is a pattern graph, as required.
\end{proof}
\end{fullproof}}

\nulp{\begin{fullproof}
\begin{lemma}
	\label{lemma:expand-bangbox}
	Let $G$ be a pattern graph, $b,b' \in !(G)$ and $H = \COPY_b(G)$.
	Suppose $b' \notin B(b)$.  Then $B_H(b')$ is an open
	subgraph of $H$ (with the equal maps of \eqref{eq:copy-pushout} as the
	inclusion of $G\graphminus B_G(b)$ into $H$).
\end{lemma}
\begin{proof}
	We label one of the maps $G \rightarrow \COPY_b(G)$ in
	\eqref{eq:copy-pushout} $p_1$ and the other $p_2$.

	Let $i \in \In(H\graphminus B_H(b'))$ and suppose $i$ has an in-edge $e$
	in $H$ such that $s(e) \notin !(H)$.  Then $s(e) \in B_H(b')$.
	Suppose $e$ is in the image of $p_1$, and let $e'$ be its preimage.
	Let $f$ be the edge from $b'$ to $s(e)$ in $H$.  If $f$ is in the
	image of $p_2$, its preimage must be in $G\graphminus B_G(b)$, since
	both its source and target are, and so $f$ must also be in the image
	of $p_1$.  So $s(e')$ is in $B_G(b')$.  $i'$ is not in $B_G(b')$,
	since otherwise $i$ would be in $B_H(b')$.  But then $i'$ is in
	$\In(G\graphminus B_G(b'))$, but not in $\In(G)$, which contradicts the
	openness of $B_G(b')$ in $G$.  So there can be no such edge $e$, and
	$i \in \In(H)$, so $B_H(b')$ is open in $H$.

\end{proof}
\end{fullproof}}

\nulp{\begin{fullproof}
\begin{lemma}
	\label{lemma:posetal-equaliser}
	Let $G$ be a posetal graph, and $v,w$ vertices of $G$ such that
	$B^\uparrow(v)\graphminus v = B^\uparrow(w)\graphminus w$ and $B(v)\cap
	B(w) = \emptygraph$.  If $\mathbb{1}$ is the single-vertex graph with
	one self-loop on the vertex, and $\hat v,\hat w:\mathbb{1} \rightarrow
	G$ are maps taking this vertex to $v$ and $w$ respectively, the
	coequaliser of $\hat v$ and $\hat w$ is itself posetal.
\end{lemma}
\begin{proof}
	Let $H$ be the coequaliser, and $h$ the coequalising map from $G$ to
	$H$.  $h$ is necessarily surjective, and so reflexivity is
	inherited from $G$.

	Let $x,y$ be vertices in $G$ such that $h(x)=h(y)$.  Let $G'$ be the
	graph
	\ctikzfig{kissinger_full-graph-g2}
	and let $f:G \rightarrow G'$ take every vertex in $G$ to $a$ except
	$y$, which it takes to $b$.  This also fixes the edge maps.  If $y
	\neq v$ and $y \neq v$, $g$ will coequalise $\hat v$ and $\hat w$, and
	so there must be a unique map $g:H \rightarrow G'$ such that $f = g
	\circ h$.  This is only possible if $x=y$.  Therefore, we must either
	have that $x=y$ or $\{x,y\}=\{v,w\}$.

	Let $x,y$ be vertices in $H$ with $e_1,e_2$ edges between them
	such that $s(e_1)=t(e_2)=x$ and $t(e_1)=s(e_2)=y$.  Let $e_1'$ be an
	arbitrary preimage of $e_1$ under $h$, and $e_2'$ a preimage of $e_2$.
	Suppose $s(e_1')=t(e_2')$.  Then we cannot have $t(e_1')=s(e_2')$, by
	anti-symmetry of $G$.  So we must have that $\{t(e_1'),s(e_2')\} =
	\{v,w\}$.  WLOG, suppose $t(e_1')=v$ and $s(e_2')=w$.  Then, since
	$B^\uparrow(v)\graphminus v = B^\uparrow(w)\graphminus w$, we must have an
	edge from $t(e_2')$ to $w$.
	But this contradicts anti-symmetry again.  So we must have that
	$s(e_1')\neq t(e_2')$, and by a symmetric argument $t(e_1')\neq
	s(e_2')$.  Therefore all the vertices must either be $v$ or $w$, which
	are mapped to the same vertex by $h$, and so $x=y$.  So we have
	anti-symmetry of $H$.

	For transitivity, let $x,y,z$ be vertices in $H$ with $e_1$ an edge
	from $x$ to $y$ and $e_2$ from $y$ to $z$.  Let $e_1',e_2'$ be
	arbitrary preimages of the respective edges under $h$, as before.
	Then either $t(e_1')=s(e_2')$, in which case the existence of an edge
	from $x$ to $z$ is immediate from transitivity of $G$, or
	$\{t(e_1'),s(e_2')\} = \{v,w\}$.  WLOG, assume $t(e_1') = v$ and
	$s(e_2') = w$.  Then $B^\uparrow(v)\graphminus v =
	B^\uparrow(w)\graphminus w$ gives us that $s(e_1')$
	must be a predecessor of $w$, and so $x$ must be a predecessor of $z$,
	as required.

	Finally, we show that $H$ is simple.  Let $x,y$ be vertices in $H$
	with $e_1,e_2$ edges from $x$ to $y$.  Let $e_1',e_2'$ be arbitrary
	preimages as before.  Suppose $s(e_1') = s(e_2')$.  If $t(e_1') =
	t(e_2')$, we must have $e_1' = e_2'$ by simplicity of $G$, and hence
	$e_1 = e_2$.  Otherwise, we have that $\{t(e_1'),t(e_2')\} = \{v,w\}$,
	in which case $e_1'$ and $e_2'$ are the images of the same edge in
	$B^\uparrow(v)$ under $\hat v$ and $\hat w$, and so $e_1 = e_2$.  Now
	suppose $s(e_1') \neq s(e_2')$, and so $\{s(e_1'),s(e_2')\} =
	\{v,w\}$.  Then $t(e_1') \neq t(e_2')$, since $B(v)\cap B(w) =
	\emptygraph$.  So $\{t(e_1'),t(e_2')\} = \{v,w\}$ and hence $e_1'$ and
	$e_2'$ are images of the same edge under $\hat v$ and $\hat w$, and so
	$e_1 = e_2$.  So $H$ is simple.

\end{proof}

\begin{proposition}
	\label{prop:instantiation-of-copy}
	Let $G$ be a pattern graph and $b \in\, !(G)$.  Then the $\mathcal
	G_3$-typed graph $\COPY_b(G)$ is a pattern graph.
\end{proposition}
\begin{proof}
	Let $H = \COPY_b(G)$.  We shall label one of the maps $G \rightarrow
	\COPY_b(G)$ in \eqref{eq:copy-pushout} $p_1$ and the other $p_2$.  $H$
	exists as $\catGraph/\mathcal G_3$ has pushouts, and $H$ is the union
	of two copies of $G$ with $G\graphminus B(b)$ as the intersection.  In
	particular, $p_1$ and $p_2$ are inclusions into $H$, and therefore
	mono.  Note that $G\graphminus B(b)$ is full in $G$, and so a pattern
	graph by lemma \ref{lemma:pattgraph-full-subgraph}.  A consequence of
	this is that if $v_1,v_2$ are vertices in $H$ with an edge $e$ between
	them, and $v_1$ and $v_2$ are both in the image of $p_1$ (resp.
	$p_2$), $e$ must be in the image of $p_1$ (resp. $p_2$).

	Let $w \in \omega(H)$, and suppose $w$ has more than one in-edge in
	$\Sigma(H)$. Now $\Sigma(G)$ is a string graph, so this is only
	possible if $w$ is in the image of both $p_1$ and $p_2$, and has
	in-edges $e_1$ and $e_2$ such that $e_1$ is only in the image of $p_1$
	and $e_2$ is only in the image of $p_2$ (and $s(e_1)$ and $s(e_2)$ are
	in $\omega(G)$).  Then $w$ must be in $G\graphminus B(b)$ and $s(e_1)$
	and $s(e_2)$ must be (the same vertex) in $B(b)$.  But if this were
	the case, $w$ would be an input in $G\graphminus B(b)$ but not in $G$,
	which is a contradiction as $B(b)$ is open in $G$.  So $w$ must have
	at most one in-edge.  Similarly, $w$ must have at most one out-edge,
	and so $\Sigma(H)$ is a string graph.

	Next we need to show that the full subgraph with vertices $!(H)$ is
	posetal.  For convenience, we shall refer to this subgraph as $H_!$,
	and to the full subgraph of $G$ with vertices $!(G)$ as $G_!$.
	That $H_!$ is simple follows from the fact that $G_!$ is simple and
	$G\graphminus B(b)$ is a full subgraph of $G$.  Now we consider $H_!$ as
	a relation on $!(H)$.  Reflexivity is clearly inherited through $p_1$
	or $p_2$ (or both).  For anti-symmetry, let $b_1,b_2 \in !(H)$ with
	an edge $e$ from $b_1$ to $b_2$ and an edge $e'$ from $b_2$ to $b_1$
	in $H_!$.  If $e$ and $e'$ are both in the image of $p_1$, then their
	preimage must be in $!_G$, which means $b_1 = b_2$ $!_G$ is posetal.
	Similarly, they if they are both in the image of $p_2$, $b_1 = b_2$.
	So suppose $e$ and $e'$ are not both in the same image. Then $b_1$ and
	$b_2$ must be in $!(G\graphminus B(b))$.  But $G\graphminus B(b)$ is a
	full subgraph of $G$, and so $e$ and $e'$ must be in $G\graphminus
	B(b)$, which means $b_1 = b_2$ again.  So the relation is
	anti-symmetric.  Finally, for transitivity, suppose $b_1,b_2,b_3 \in
	!(H)$ with an edge $e$ from $b_1$ to $b_2$ and an edge $e'$ from $b_2$
	to $b_3$.  If both edges are in the image of the same map,
	transitivity is inherited from that copy of $G$, so suppose not.
	WLOG, assume $e$ is in the image of $p_1$ and $e'$ in the image of
	$p_2$.  Then $b_2$ must be in $!(G\graphminus B(b))$.  Since $e$ is not
	in the image of $p_2$, $e'$ is not in the image of $p_1$ and
	$G\graphminus B(b)$ is full in $G$, we must have that neither $b_1$ nor
	$b_3$ are in $!(G\graphminus B(b))$.  But then $b_1$ is in $B(b)$ which
	means that $b_2$ is also in $B(b)$, since $b_2$ is in $B(b_1)$ and
	$G_!$ is posetal.  This is a contradiction, so $e$ and $e'$ must both
	be in the image of the same map, and so $!_H$ is transitive, and hence
	posetal.

	Let $c \in !(H)$.  If $c$ is in the image of both $p_1$ and $p_2$,
	$B(c)$ must be open in $H$ by lemma \ref{lemma:expand-bangbox}.  So
	suppose it is only in the image of $p_1$ (the $p_2$ case is symmetric)
	and let $c'$ be the preimage of $c$ under $p_1$.  Let $i \in
	\In(H\graphminus B(c))$ and suppose $i \notin \In(H)$.  Then $i$ has an
	in-edge $e$ in $H$ with $s(e) \in B(c)$.  So $s(e)$ must have a
	preimage under $p_1$ in $B(c')$.  But this preimage must be in
	$B(b)$, as $c'$ must be in $B(b)$ and so $B(c') \subseteq B(b)$,
	and so $s(e)$ cannot also be in the image of $p_2$.  Therefore $e$ can
	only be in the image of $p_1$.  Let its preimage be $e'$.  If
	$t(e')$ is not in $B(c')$, it would be in $\In(G\graphminus B(c'))$ but
	not in $\In(G)$, which contradicts openness of $B(c')$, so $t(e')$
	must be in $B(c')$, and hence its image under $p_1$ must be in $B(c)$.
	But this image is $i$, which was in $H\graphminus B(c)$.  This is a
	contradiction, and hence there can be no such $e$, so $i \in \In(H)$.
	So $B(c)$ is open in $H$ as required.

	Let $c,d \in !(H)$ with $c \in B(d)$, and let $v \in B(c)$.  We need
	to show that $v \in B(d)$.  Say that the edge linking $c$ and $d$ is
	in the image of $p_1$ (the $p_2$ case is symmetric).  Call their
	preimages $c'$ and $d'$.  Let $e$ be the edge from $c$ to $v$.  If it
	is in the image of $p_1$, then the preimage of $v$ is in $B(c')$, and
	hence in $B(d')$, and hence $v$ is in $B(d)$ as required.  Otherwise,
	it must be in the image of $p_2$, and $c'$ must be in $G\graphminus
	B(b)$.  Now $d'$ is also in $G\graphminus B(b)$, since if it were in
	$B(b)$ then $c$ would be as well, so the edge linking them is also in
	$G\graphminus B(b)$ and we get that the preimage of $v$ under $p_2$ is
	in $B(d')$, and hence $v$ is in $B(d)$.  So $B(c) \subseteq B(d)$.
\end{proof}

\begin{proposition}
	\label{prop:instantiation-of-merge}
	Let $G$ be a pattern graph and $b,b' \in\, !(G)$, with
	$B^\uparrow(b)\graphminus b = B^\uparrow(b')\graphminus b'$ and $B(b)
	\cap B(b') = \emptygraph$.  Then the $\mathcal G_3$-typed graph
	$\MERGE_{b,b'}(G)$ is a pattern graph.
\end{proposition}
\begin{proof}
	Let $H = \MERGE_{b,b'}(G)$, and let $h$ be the coequaliser map.  $H$
	exists, as $\catGraph/\mathcal G_3$ has coproducts and pushouts along
	monomorphisms, and so has coequalisers of monomorphisms.  $\Sigma(H)$
	is isomorphic (by $h$) to $\Sigma(G)$, and hence is a string graph.
	To see this, consider the graph $G'$ consisting of $\Sigma(G)$
	together with a single $!$-vertex $c$ with edges to every vertex in
	$G'$ (ie: the entire graph is contained in the $!$-box), and consider
	the map $f:G\rightarrow G'$ that is isomorphic on $\Sigma(G)$ and
	takes every $!$-vertex to $c$.  This clearly coequalises $\hat b$ and
	$\hat b'$, and so there must be a unique $g:H\rightarrow G'$ such that
	$f = g \circ h$.  Since $f\restriction_{\Sigma(G)}$ is an isomorphism,
	$h\restriction_{\Sigma(G)}$ must also be an isomorphism.

	With the same notation as before, $h\restriction_{G_!}$ is the
	coequaliser in $\catGraph$ of $\hat b$ and $\hat b'$ (which can be
	considered as inclusions into $G_!$).  The image of
	$h\restriction_{G_!}$ is exactly $H_!$, and so $H_!$ is posetal by
	lemma \ref{lemma:posetal-equaliser}.

	Let $c \in !(H)$.  Let $p \in \In(H\graphminus B(c))$, and suppose there
	is a vertex $q$ and edge $e$ from $q$ to $p$ in $H$ (ie: $p \notin
	\In(H)$).  Then $p$, $q$ and $e$ each have a single preimage under $h$
	(since $h$ restricts to an isomorphism between $\Sigma(G)$ and
	$\Sigma(H)$), which we shall denote $p'$, $q'$ and $e'$ respectively.
	Consider the preimage of $c$ under $h$.  As noted in the proof of
	lemmga \ref{lemma:posetal-equaliser}, this must either be a single $c'
	\in !(G)$, or be $\{b,b'\}$.  In the former case, the edge from $c$ to
	$p$ must have a preimage under $h$, in which case $p'$ is in $B(c')$.
	But then the existance of $e'$ and the fact that $B(c')$ is open means
	that there must be an edge from $c'$ to $q'$, which must have an image
	under $h$, and so $q$ must be in $B(c)$ and $p \notin \In(H\graphminus
	B(c))$.  In the latter case, we have an edge from $b$ or $b'$ to $p$.
	WLOG, assume there is one from $b$ to $p$, in which case $p$ is in
	$B(b)$, and then $q$ must be in $B(b)$ and so $q'$ is in $B(c)$ as
	before.  Either way, we reach a contradiction.  The case for outputs
	is symmetric, and so $B(c)$ must be open in $H$.

	Let $c,d \in !(H)$, with $d \in B(c)$, and let $v$ be a vertex in
	$B(d)$.  So there is an edge $e$ from $d$ to $v$.  If $c = d$, the
	required result (that $B(d) \subseteq B(c)$) is trivial, so assume
	not.  We have three posible cases: $c$ and $d$ both have single
	preimages $c'$ and $d'$, $c$ has a single preimage $c'$ and $d$ has
	the preimage $\{b,b'\}$ or $c$ has the preimage $\{b,b'\}$ and $d$ has
	a single preimage $d'$.  In the first and third cases, $e$ has at
	least one preimage $e'$ from $d'$ to a preimage of $v$, $v'$.  In the
	first case, the edge from $c$ to $d$ has a preimage from $c'$ to $d'$,
	which implies that there is an edge from $c'$ to $v'$.  But this edge
	must have an image under $h$ from $c$ to $v$, so $v \in B(c)$.  In the
	second case, the requirement that $B^\uparrow(b)\graphminus b =
	B^\uparrow(b')\graphminus b'$ means that there must be edges from $c'$
	to both $b$ and $b'$.  Since $e$ must have a preimage from either $b$
	or $b'$ to a preimage $v'$ of $v$, $c'$ must have an edge to $v'$ and
	its image is an edge from $c$ to $v$.  In the third case, the edge
	from $c$ to $d$ has a preimage from exactly one of $b$ or $b'$.  WLOG,
	assume $b$.  Then there must be an edge from $b$ to $v'$, and the
	image of that edge is from $c$ to $v$.  So we have that $B(d)
	\subseteq B(c)$.
\end{proof}
\end{fullproof}}

\begin{theorem}
	\label{thm:instantiation}
	Let $G$ be a pattern graph and $b \in\, !(G)$.  Then the $\mathcal
	G_3$-typed graphs $\COPY_b(G)$, $\DROP_b(G)$ and $\KILL_b(G)$ are all
	pattern graphs.  If we further suppose that $b' \in\, !(G)$ with
	$B^\uparrow(b)\graphminus b = B^\uparrow(b')\graphminus b'$ and $B(b) \cap
	B(b') = \emptygraph$, then $\MERGE_{b,b'}(G)$ is also a pattern graph.
\end{theorem}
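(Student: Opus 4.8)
The statement is essentially a bookkeeping result that collects four separate claims, two of which carry all the content and two of which are immediate. The plan is to dispatch the $\COPY$ and $\MERGE$ cases by appealing directly to the two preceding propositions that establish, respectively, that $\COPY_b(G)$ and $\MERGE_{b,b'}(G)$ are pattern graphs, and to reduce the $\DROP$ and $\KILL$ cases to the earlier lemma asserting that every full subgraph of a pattern graph is again a pattern graph.

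For $\DROP_b(G) = G \graphminus b$, the point is that deleting the single $!$-vertex $b$ together with its incident edges (its self-loop, its out-edges into the box, and any in-edge from a parent $!$-vertex) leaves exactly the full subgraph of $G$ on $V(G) \graphminus \{b\}$; fullness is therefore immediate and the lemma applies. For $\KILL_b(G) = G \graphminus B(b)$, I would first record that $B(b)$ is by definition the full subgraph on $\textrm{succ}(b)$, so its graph-theoretic complement $G \graphminus B(b)$ is the full subgraph on $V(G) \graphminus \textrm{succ}(b)$; again it is full and the lemma applies. In neither case is any fresh verification of the four pattern-graph axioms required: the full-subgraph lemma already guarantees that $\Sigma$ restricts to a string graph, that $\beta$ restricts to a poset, that each surviving $!$-box stays open (via the clause ``$H \cap O$ open in $H$'' of Proposition~\ref{prop:open-subgraph-props}), and that the nesting condition is inherited.

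The genuine difficulty, and thus the main obstacle, sits entirely inside the two propositions being invoked. For $\COPY$ one must check, after forming the pushout that glues two copies of $G$ along $G \graphminus B(b)$, that no wire-vertex picks up a second in- or out-edge at the seam (so that $\Sigma$ remains a string graph) and that every $!$-box, copied or not, stays open; here the openness of $B(b)$ in $G$ is precisely what rules out a branching wire forming where the two copies meet. For $\MERGE$ one must check that the coequaliser identifying $B^\uparrow(b)$ with $B^\uparrow(b')$ keeps the $!$-vertex subgraph posetal --- this is exactly where the hypotheses $B^\uparrow(b) \graphminus b = B^\uparrow(b') \graphminus b'$ and $B(b) \cap B(b') = \emptygraph$ are consumed, through the posetal-coequaliser lemma --- and keeps every box open. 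Granting those two propositions, the theorem follows by merely assembling the four cases.
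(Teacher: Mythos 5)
Your proposal matches the paper's own proof: the paper likewise dispatches the $\COPY$ and $\MERGE$ cases by citing the two preceding propositions, and handles $\DROP_b(G)=G\graphminus b$ and $\KILL_b(G)=G\graphminus B(b)$ by observing that each is a full subgraph of $G$ and invoking the lemma that full subgraphs of pattern graphs are pattern graphs. Your added remarks correctly locate where the openness of $B(b)$ and the hypotheses $B^\uparrow(b)\graphminus b = B^\uparrow(b')\graphminus b'$, $B(b)\cap B(b')=\emptygraph$ are consumed, so there is no gap.
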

\nulp{\begin{fullproof}
\begin{proof}
	The $\COPY$ and $\MERGE$ cases are given by the above propositions.

	Let $H = \DROP_b(G)$.  $G\graphminus \{b\}$ is trivially full in $G$,
	and so $H$ is a pattern graph by lemma
	\ref{lemma:pattgraph-full-subgraph}.

	Similarly, $\KILL_b(G)$ is a pattern graph by the same lemma.
\end{proof}
\end{fullproof}}

Applying one of these four operations any number of times to a pattern graph yields a more specific pattern. As such, we can define a refinement (pre-)ordering on pattern graphs.

\begin{definition}
	For pattern graphs $G$, $H$, we let $G \succeq H$ if and only if $H$
	can be obtained from $G$ (up to isomorphism) by applying the four
	operations from definition~\ref{def:bbox-ops} zero or more times. If
	$H$ is a concrete graph, it is called an \textit{instance} of $G$, and
	the sequence of operations used to obtain $H$ from $G$ is called the
	\textit{instantiation}.
\end{definition}

\nulp{\begin{fullproof}
\begin{lemma}
	\label{lemma:copy-maps-bounds}
	Let $G$ be a pattern graph, and $b \in !(G)$, and consider the copy
	operation:
	\[ \posquare{G\graphminus B(b)}{G}{G}{\COPY_b(G)}{i_1}{p_2}{i_2}{p_1} \]

	Then all the maps in the pushout take inputs to inputs and outputs to
	outputs.
\end{lemma}
\begin{proof}
	The square is symmetric, and the cases for inputs mirrors the case for
	outputs, so we just consider inputs for $i_1$ and $p_1$.

	That $i_1$ maps inputs to inputs follows from the fact that $B(b)$ is
	open in $G$ and $i_1$ is an inclusion map.

	Suppose $w \in \In(G)$, and consider $p_1(w)$.  If there is an edge
	$e$ in $\COPY_b(G)$ with $t(e) = p_1(w)$, then $e$ cannot be in the
	image of $p_1$, since $w$ is an input of $G$.  So $e$, and hence
	$p_1(w)$, must be in the image of $p_2$.  But then there must be a
	vertex $w'$ in $G\graphminus B(b)$ with $i_1(w') = w$ and
	$p_2(i_2(w'))$ = $p_1(w)$, since this is a pushout.  $w'$ must be an
	input of $G\graphminus B(b)$, since if there were an incoming edge to
	$w'$, it would have to map to an incoming edge of $w$ in $G$, and
	there is no such edge.  So $i_2(w')$ must also be an input of $G$,
	since $i_2$ maps inputs to inputs.  Then $i_2(w')$ cannot have an
	incoming edge, and so $e$ cannot be in the image of $p_2$.  So there
	is no such edge $e$, and $p_1(w) \in \In(\COPY_b(G))$.
\end{proof}
\end{fullproof}}

\nulp{\begin{fullproof}
\begin{lemma}
	\label{lemma:copy-preserves-bounds}
	Let $G$ be a pattern graph, and $b \in !(G)$.  Then
	$\In(\COPY_b(G))$ is isomorphic to the disjoint union of $\In(G)$ and
	$\In(G)\cap\omega(B(b))$, and similarly for $\Out(\COPY_b(G))$.
\end{lemma}
\begin{proof}
	\[ \posquare{G\graphminus B(b)}{G}{G}{\COPY_b(G)}{i_1}{p_2}{i_2}{p_1} \]

	Lemmas \ref{lemma:sg-morphism-bounds} and
	\ref{lemma:copy-maps-bounds} give us a one-to-one
	correspondance between inputs of $G$ and inputs of $\COPY_b(G)$ that
	are in the image of $p_1$.  We know that if $v$ is a vertex in $G$ and
	$p_2(v)$ is not in the image of $p_1$, then $v$ cannot be in the image
	of $i_2$, and hence must be in $B(b)$ (since $i_2$ is an inclusion
	map).  So there is a one-to-one correspondance between inputs of $G$
	that are in $B(b)$ and inputs of $\COPY_b(G)$ that are not in the
	image of $p_1$ (which must be in the image of $p_2$, since this is a
	pushout).  These two one-to-one mappings give us the result.
\end{proof}
\end{fullproof}}

\nulp{\begin{fullproof}
\begin{lemma}
	\label{lemma:kill-preserves-bounds}
	Let $G$ be a pattern graph, and $b \in !(G)$, and consider
	$\KILL_b(G)$ as a subgraph of $G$, with inclusion map $\iota$.  Then
	for any $v \in \omega(\KILL_b(G))$, $v$ is an input of $\KILL_b(G)$ if
	and only if $\iota(v)$ is an input of $G$, and similarly for outputs.
\end{lemma}
\begin{proof}
	This follows immediately from the fact that $B(b)$ is an open subgraph
	of $G$.
\end{proof}
\end{fullproof}}

\subsection{Nested and overlapping \texorpdfstring{$!$}{!}-boxes} 
\label{sec:nesting-overlapping}

Due to the definition of $\COPY$ as a pushout of inclusions, the absence of an
edge between $!$-vertices $b_1$ and $b_2$ with $B(b_1)\cap
B(b_2)\neq\emptygraph$
results in both copies of the contents of $b_1$ created having the same
connectivity to $b_2$ as they had in the original graph:
\ctikzfig{kissinger_copy_overlapping2}

Note that it is not actually necessary that $B(b_2)\graphminus b_2$ is
completely contained in $B(b_1)\graphminus b_1$ here.  On the other hand, if
$B(b_2)\graphminus b_2$ \emph{is} a subgraph of $B(b_1)\graphminus b_1$, we could
also add an edge from $b_1$ to $b_2$, which would result in a new copy of
$b_2$ being created to contain the copies of the vertices in $B(b_2)$.
\ctikzfig{kissinger_copy_nested}

\begin{definition}
	For a pattern graph $G$ with distinct $!$-vertices $b_1$ and $b_2$, we
	say $b_2$ is \emph{nested} in $b_1$ if there exists a directed edge
	from $b_1$ to $b_2$. If this is not the case, but $B(b_1)\cap B(b_2)
	\not=\emptygraph$, we call $b_1$ and $b_2$ \emph{overlapping}.
\end{definition}

Both of the above examples could be seen as attempts to formalise the family of all trees of height up to 2. However,
\ctikzfig{kissinger_arbitrary-tree-expand}
but
\ctikzfig{kissinger_balanced-tree-expand}
The absence of nesting restricts the instances to those trees where all the
first-level nodes have the same number of children; in other words, it allows
only balanced trees.  Removing the nesting enforces a higher degree of
regularity in the concrete graphs that can be expressed.

Nesting, in fact, always makes a pattern graph more general in the following
sense:

\begin{proposition}
	Let $G$ be a pattern graph and $b_2$ be nested in $b_1$ in $G$, with the edge
	from $b_1$ to $b_2$ being $e$. Then the set of instances of the graph
	$H=G\graphminus e$
	is a subset of the set of instances of $G$.
\end{proposition}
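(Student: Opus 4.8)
The plan is to take an arbitrary instantiation of $H$ --- a finite sequence of $!$-box operations from Definition~\ref{def:bbox-ops} producing a concrete graph $C$ --- and to build from it an instantiation of $G$ that produces the very same $C$, thereby exhibiting each instance of $H$ as an instance of $G$. If $H$ is not itself a pattern graph it has no instances and the inclusion holds vacuously; this happens, for instance, when a third $!$-vertex lies strictly between $b_1$ and $b_2$ in $\beta(G)$, so that deleting $e$ would destroy transitivity of the $!$-vertex poset. I would therefore assume $b_2$ is an immediate successor of $b_1$, so that $H$ is again a pattern graph.

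First I would locate the single point at which $G$ and $H$ respond differently to the four operations. Since deleting the vertex $b_1$ also deletes $e$, we have $\DROP_{b_1}(G)=\DROP_{b_1}(H)$; and any operation on a $!$-vertex other than $b_1$ acts identically on the two graphs, because $e$ is invisible to every vertex except $b_1$ and $b_2$. The one genuine discrepancy is $\COPY_{b_1}$: because $b_2\in B(b_1)$ in $G$ but $b_2\notin B(b_1)$ in $H$, copying $b_1$ in $G$ duplicates $b_2$ together with its box, whereas in $H$ it leaves a single $b_2$ whose box merely absorbs the extra copies of the shared contents $B(b_2)\graphminus b_2$ --- precisely the overlapping-copy picture drawn above. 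Consequently, after any prefix of operations, the current $G$-graph differs from the current $H$-graph only in carrying a whole \emph{family} of parallel copies of $b_2$ where $H$ carries fewer, the boxes of these copies being disjoint and jointly isomorphic to what $H$ collects under its own $b_2$-descendants.

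The key step is to promote this into an invariant carried along the construction: a morphism $q_t\colon G_t\to H_t$ in $\catGraph/\mathcal G_3$ that is a bijection on node- and wire-vertices and on all $!$-vertices outside the $b_2$-family, and that collapses each fibre of parallel $b_2$-copies in $G_t$ onto the corresponding $b_2$-descendant in $H_t$. Each operation of the $H$-instantiation is then mirrored so as to preserve $q_t$: operations away from the $b_2$-family (including $\COPY_{b_1}$) are performed verbatim in $G$, where $\COPY_{b_1}$ simply doubles the fibres in exact step with the doubling of shared contents in $H$; and an operation that $H$ applies to one of its $b_2$-descendants is mirrored by applying the analogous operation to every member of the corresponding fibre in $G$. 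Because the boxes in a fibre are disjoint and each carries an isomorphic copy of $B(b_2)\graphminus b_2$, expanding them all to a common multiplicity reproduces, copy for copy, the effect of the single expansion in $H$ --- this is the formal content of the slogan that a balanced tree is a special case of an arbitrary one. When the $H$-instantiation terminates, every $!$-vertex has been dropped or killed, $q_t$ becomes an isomorphism of concrete graphs, and the $G$-instantiation I have assembled yields exactly $C$.

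The hard part will be making this invariant genuinely survive each operation rather than merely plausibly so. The asymmetry of $\COPY_{b_1}$ must be tracked carefully, since one copy on the $H$-side spawns a new member of a fibre on the $G$-side with no matching new vertex in $H$; dually, a single $\COPY$ of a $b_2$-descendant in $H$ is mirrored by several copies in $G$, and one must check that the induced pairing of the primed copies is compatible with $q_t$. The most delicate case is mirroring a $\MERGE_{c,c'}$ appearing in the $H$-instantiation: when $c,c'$ lie in the $b_2$-family one must merge the two fibres of $G$ pairwise and verify that the side-conditions of Definition~\ref{def:bbox-ops} --- agreement of the punctured parent graphs $B^\uparrow(\cdot)\graphminus(\cdot)$ and disjointness of the boxes --- still hold in $G$. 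Verifying that ``expand every member of the fibre uniformly'' really is a legal and faithful sequence of operations, with each intermediate graph a pattern graph by Theorem~\ref{thm:instantiation}, is where the genuine work of the proof lies.
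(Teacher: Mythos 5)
Your route is genuinely different from the paper's. The paper justifies this proposition with a one-sentence simulation in the opposite bookkeeping style: it mirrors each operation of the $H$-instantiation in $G$ verbatim, and whenever $b_1$ (or a copy of it) is copied --- the only point where the two sides diverge --- it immediately applies $\MERGE_{b_2,b_2'}$ to the two copies of $b_2$ just created. The invariant is then maximally simple: the current $G$-side graph is the current $H$-side graph plus some extra parallel copies of the nesting edge $e$, which disappear once all $!$-vertices have been dropped or killed; no collapse map and no broadcasting are needed. You never merge: you let the $b_2$-copies proliferate into disjoint fibres and replay each $H$-operation on a $b_2$-descendant across the whole fibre. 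Your version costs more case analysis (exactly the $\COPY$ and $\MERGE$ delicacies you flag), but it buys something real: the paper's merge, read strictly against Definition~\ref{def:bbox-ops}, is not licensed, because after $\COPY_{b_1}$ the two copies of $b_2$ have \emph{different} punctured parent graphs --- $B^\uparrow(b_2)\graphminus b_2$ contains $b_1$ while $B^\uparrow(b_2')\graphminus b_2'$ contains the copy $b_1'$ --- so the side condition $B^\uparrow(b)\graphminus b = B^\uparrow(b')\graphminus b'$ fails and the paper is implicitly invoking a relaxed merge. Your fibre discipline sidesteps that entirely. Your preliminary reduction (if $H$ fails to be a pattern graph the claim holds vacuously, so one may assume $b_2$ is an immediate successor of $b_1$ in $\beta(G)$) is likewise a point of care the paper skips: the proposition as stated carries no hypothesis guaranteeing $H$ is an object of $\catSPatGraph$.

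One concrete misstatement needs repair: the invariant $q_t\colon G_t \to H_t$ cannot be a morphism in $\catGraph/\mathcal G_3$ as you state it, because the copies of $e$ present in $G_t$ have no possible image --- $H_t$ contains no edge from (a copy of) $b_1$ to the corresponding $b_2$-descendant, and a graph morphism must send every edge somewhere. Either define $q_t$ on $G_t$ with the tracked nesting edges deleted, or restate the invariant as ``$H_t$ is obtained from $G_t$ by deleting the tracked copies of $e$ and then collapsing each fibre onto its $b_2$-descendant'' (you will also want to allow empty fibres, e.g.\ after $H$ performs $\KILL$ on a copy of $b_1$, which removes the fibre members in $G$ but leaves the corresponding $b_2$-descendant alive in $H$ until it is later dropped or killed). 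With that correction the scheme is coherent; the remaining verifications are the ones you yourself identify --- legality of the pairwise fibre merges and of uniform fibre copying, with Theorem~\ref{thm:instantiation} keeping each intermediate stage a pattern graph --- and, to be fair, the paper does not carry these out either: its published argument for this proposition is only the merge-based sketch described above.
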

\nulp{\begin{fullproof}
\begin{proof}
\TODOinline{write a proof}
\end{proof}
\end{fullproof}}

This becomes evident when we observe that we can track operations on $H$ in $G$ by performing a $\MERGE_{b_2,b_2'}$ on the two copies of $b_2$ produced whenever $b_1$ or a copy of it is copied (and performing the same operation otherwise), producing the same pattern graph apart from additional copies of $e$, which must eventually be dropped to obtain a concrete graph.

\section{Matching and rewriting with pattern graphs} 
\label{sec:rw-pattern-graphs}

For those familiar with patterns in functional programming languages, the name
``pattern graph'' suggests that there should be a concept of
\textit{matching}, and given a pattern graph and a string graph, it should be
possible to determine whether the string graph is matched by the pattern
graph.  This is, in fact, the case.  First, we recall how matching between
string graphs is defined.

\begin{definition}
	\label{def:sgraph-matching}
	A monomorphism, $m : G \rightarrow H$, of string graphs is called a
	\emph{string graph matching} when, for every node-vertex $n \in \eta(G)$, the edge
	function of $m$ restricts to a bijection between the set of edges
	connected to $n$ in $G$ and the set of edges connected to $m(n)$ in $H$.
	In this case, $G$ is said to \emph{match} $H$ at $m$.
\end{definition}

The concept of a matching from a pattern graph to a string graph is
straightforward: if there is an instance of the pattern graph that matches the
string graph, then the pattern graph is said to match the string graph.

\begin{definition}
	\label{def:pgraph-matching}
	Let $P$ be a pattern graph, and $H$ a string graph.  If there is an
	instance $G$ of $P$, with instantiation $S$, that matches $H$ at a
	morphism $m$, $P$ is said to \textit{match} $H$ at $m$ under
	instantiation $S$.
\end{definition}

Determining whether such an $m$ and $S$ exist, and what possible values they
can take, is decidable, although we do not have space to show that here.  The
full details are set out in a document in the
Quantomatic\footnote{\url{http://sites.google.com/site/quantomatic}}
repository.

Given a concept of matching, we can proceed to define how to do rewriting of
string graphs using rules built from pattern graphs.  We start by recalling
how rewriting of string graphs using string graph rewrite rules works.

\begin{definition}[Rewrite Rule] \label{def:rewrite-rule}
  A span of string graphs $L \overset{i_1}{\longleftarrow} I \overset{i_2}{\longrightarrow} R$ is called a \textit{rewrite rule}, written $L \rewritesto R$, if
	\begin{enumerate}
		\item $I$ is a point graph and $i_1$ restricts to a bijection $I \cong \Bound(L)$ and $i_2$ to $I \cong \Bound(R)$ and
		\item for all $p \in I$, $i_1(p) \in \In(L) \Leftrightarrow i_2(p) \in \In(R)$ and $i_1(p) \in \Out(L) \Leftrightarrow i_2(p) \in \Out(R)$
	\end{enumerate}
	In other words, $L$ and $R$ \textit{share the same boundary}.
\end{definition}

For a pair of morphisms $\crun{I}{i_1}{L}{m}{G}$, a \textit{pushout complement} is some string graph $G -_{m} L$ completing the pushout square:
\begin{equation}\label{eq:po-complement}
	\posquare{I}{L}{G -_m L}{G}{}{}{}{}
\end{equation}

\begin{theorem}[Dixon-Kissinger~\cite{DixonKissinger2010}]
	For a rewrite rule $L \rewritesto R$ and a matching $m : L \rightarrow
	G$, the pushout complement \eqref{eq:po-complement} exists and is unique.
\end{theorem}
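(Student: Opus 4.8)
The plan is to construct the pushout complement $G-_m L$ explicitly as a subgraph of $G$, verify the pushout property by hand, and then obtain uniqueness from the adhesivity of $\catGraph$. Concretely, I would form the candidate $D$ by deleting from $G$ the $m$-images of every non-boundary vertex of $L$ (the vertices in $V(L)\graphminus\Bound(L)$) together with the $m$-images of all edges of $L$, and retaining everything else; the maps of \eqref{eq:po-complement} are then the inclusion $D\injmap G$ and the corestriction of $m\circ i_1$, which lands in $D$ because $m(i_1(I))=m(\Bound(L))$ consists precisely of retained vertices.

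The key step is the \emph{dangling condition}: every edge of $G$ incident to the image of a non-boundary vertex of $L$ must already lie in $m(E(L))$, for only then is $D$ a well-defined subgraph (no retained edge can touch a deleted vertex). A non-boundary vertex $v$ of $L$ is either a node-vertex or a wire-vertex possessing both an in-edge and an out-edge, since a wire-vertex lacking one of these would be an input or output and hence a boundary point. If $v\in\eta(L)$, the matching condition of Definition~\ref{def:sgraph-matching} gives a bijection between the edges at $v$ and the edges at $m(v)$, so every edge at $m(v)$ is an $m$-image. If $v\in\omega(L)$ is a non-boundary wire-vertex with in-edge $e^-$ and out-edge $e^+$, then $m(e^-)$ and $m(e^+)$ are an in- and an out-edge of $m(v)$; but $m(v)$ is a wire-vertex of the string graph $G$, so it has at most one in-edge and one out-edge, forcing these to be the only edges at $m(v)$. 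In both cases the dangling condition holds. Being a subgraph of the string graph $G$, the candidate $D$ inherits the bound of at most one in- and one out-edge at each wire-vertex, so $D$ is itself a string graph.

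Next I would check that the square is a pushout by showing $m(L)\cup D=G$ and $m(L)\cap D=m(\Bound(L))$ with no common edges. On vertices this is immediate from the definition of $D$; on edges, every edge of $G$ either lies in $m(E(L))$ or, by the dangling condition, has both endpoints retained and hence lies in $E(D)$, while $m(E(L))$ and $E(D)$ are disjoint by construction. Since $I$ is a point graph it contributes no edges to be identified, so $G$ is exactly the gluing of $L$ and $D$ along $I\cong\Bound(L)$, i.e.\ the pushout. The boundary compatibility built into Definition~\ref{def:rewrite-rule} is what guarantees the retained boundary vertices carry the correct in/out status; this is also where the one delicate case appears, namely an edge of $L$ directly between an input and an output (a bare wire), whose image must be deleted from $D$ and then restored through $L$ in the pushout, lest the gluing produce a spurious parallel edge.

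For uniqueness I would appeal to the fact that $\catGraph=[\mathbb G,\catSet]$ is a presheaf topos, hence adhesive, as is the slice $\catGraph/\mathcal G_2$; in an adhesive category the pushout complement of $I\overset{i_1}{\injmap}L\overset{m}{\rightarrow}G$ with $i_1$ monic is unique up to isomorphism whenever it exists. Since $i_1$ is injective and $D$ is a string graph lying in the full subcategory $\catSGraph$, the $D$ constructed above is the unique complement. I expect the dangling-condition argument of the second paragraph to be the crux: it is precisely there that the two defining features of the formalism must be combined, with the matching condition supplying full connectivity at node-vertices and the string-graph axiom supplying it at interior wire-vertices, and the boundary bookkeeping (distinguishing interior from boundary wire-vertices, and handling stray edges between boundary points) must be done carefully to be sure deletion leaves nothing dangling and the pushout reassembles $G$ exactly.
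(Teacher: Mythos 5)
Your proposal is correct and takes essentially the same route as the source: the paper itself states this theorem without proof, importing it from Dixon and Kissinger~\cite{DixonKissinger2010}, whose argument likewise constructs the complement by deletion and verifies the no-dangling (gluing) condition exactly as you do --- the matching bijection supplying fullness at node-vertices and the one-in/one-out string-graph bound supplying it at interior wire-vertices --- with uniqueness obtained from adhesivity of the presheaf/slice category, since $i_1$ is mono. The subtleties you flag (bare wires between boundary points, corestricting $m \circ i_1$ to the retained vertices, and checking the complement remains a string graph) correspond to the bookkeeping carried out in that reference, so there is no gap to report.
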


Rewriting is performed via the double-pushout (DPO) technique. First, the pushout complement is computed, to remove the LHS of a rewrite rule, then the RHS is ``glued in'' with a second pushout. The rewrite rule is said to rewrite $G$ to $G'$ (also written $G \rewritesto G'$, when there is no ambiguity) at a matching $m : L \rightarrow G$, when $G'$ is defined according to the following DPO diagram:
\begin{center}
	\begin{tikzpicture}
		\matrix (m) [cdiag] {
        L & I & R \\
        G & G-_{m}L & G'\\
		};
		\path [arrs]
	  	  	(m-1-2) edge [] node [swap] {$i_1$} (m-1-1)
	  	  	(m-1-2) edge [] node {$i_2$} (m-1-3)

	  	  	(m-1-1) edge[] node [swap] {$m$} (m-2-1)
	  	  	(m-1-2) edge[] node {} (m-2-2)
	  	  	(m-1-3) edge[] node {} (m-2-3)

	  	  	(m-2-2) edge [] node {} (m-2-1)
	  	  	(m-2-2) edge [] node {} (m-2-3);
	\NEbracket{(m-2-1)};
    \NWbracket{(m-2-3)};
	\end{tikzpicture}
\end{center}

\begin{definition}
	A \emph{rewrite pattern} is a span of pattern graphs
	$\cspan{L}{i_1}{I}{i_2}{R}$ where
	\begin{enumerate}
		\item $\Sigma(I)$ is a point graph;
		\item $L$ and $R$ share the same boundary via
			$\Sigma(i_1)$ and $\Sigma(i_2)$;
		\item $\beta(i_1)$ and $\beta(i_2)$ are graph isomorphisms;
			and
		\item for each $b \in\, !(I)$, the preimage of $B(i_1(b))$ under $i_1$
			is exactly $B(b)$, and similarly for the preimage of
			$B(i_2(b))$ under $i_2$.
	\end{enumerate}
\end{definition}

Note that the first two conditions ensure that simply applying the forgetful
functor $U:\catSPatGraph \rightarrow \catSGraph$ to this span yields a rewrite
rule, as defined above.

Since our concept of matching involves applying $!$-box operations to the
pattern graph, we need to extend the !-box operations to rewrite patterns.
The rule is that any operation performed on a !-box in $L$ must also be
performed on the equivalent !-box (determined by the bijection induced by
$i_1$ and $i_2$) in $R$.

\begin{lemma}
	\label{lemma:rw-patt-bbox-containment}
	If $\cspan{L}{i_1}{I}{i_2}{R}$ is a rewrite pattern then, for all $b
	\in\, !(I)$, the image of $I\graphminus B(b)$ under $i_1$ is contained
	in $L\graphminus B(i_1(b))$, and similarly for $i_2$ and $R\graphminus
	B(i_2(b))$.
\end{lemma}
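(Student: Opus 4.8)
The plan is to reduce the claim to a statement about individual vertices and then read it off directly from condition~4 of the definition of a rewrite pattern. I would first recall that, for a subgraph $H \subseteq G$, the graph $G \graphminus H$ consists of exactly those vertices of $G$ not lying in $H$ together with those edges both of whose endpoints avoid $H$; thus membership in $G \graphminus H$ is determined entirely at the level of vertices, with edges following automatically. Since $i_1$ is a morphism of $\mathcal G_3$-typed graphs it preserves vertex types (so $i_1(b) \in\, !(L)$ and $B(i_1(b))$ is defined) and commutes with source and target, so the image $i_1(I \graphminus B(b))$ has vertices $i_1(v)$ for $v \notin B(b)$ and edges $i_1(e)$ for edges $e$ of $I$ both of whose endpoints avoid $B(b)$. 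Hence it suffices to prove the vertex-level containment: for every vertex $v$ of $I$ with $v \notin B(b)$, we have $i_1(v) \notin B(i_1(b))$.

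Next I would dispatch this vertex-level claim using condition~4, which states that the preimage of $B(i_1(b))$ under $i_1$ is exactly $B(b)$; in particular $i_1^{-1}(B(i_1(b))) \subseteq B(b)$. Taking the contrapositive, $v \notin B(b)$ forces $i_1(v) \notin B(i_1(b))$, as required. I would then extend to edges: if $e$ is an edge of $I \graphminus B(b)$, both endpoints $s(e)$ and $t(e)$ lie outside $B(b)$, so by the vertex case $s(i_1(e)) = i_1(s(e))$ and $t(i_1(e)) = i_1(t(e))$ both lie outside $B(i_1(b))$, whence $i_1(e) \in L \graphminus B(i_1(b))$. Together these give $i_1(I \graphminus B(b)) \subseteq L \graphminus B(i_1(b))$, and the argument for $i_2$ and $R$ is verbatim the same using the second half of condition~4.

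I do not expect a genuine obstacle here: the whole content of the lemma is that condition~4 pins down the preimage of each $!$-box of $L$ (resp.\ $R$) to be the corresponding $!$-box of $I$, and graph subtraction transports cleanly across $i_1$ because it is governed by vertices alone. The only place that calls for care is the bookkeeping around the definition of $\graphminus$ --- specifically checking that an edge survives the subtraction exactly when both of its endpoints do --- so that the edge case really does reduce to the vertex case rather than requiring a separate argument.
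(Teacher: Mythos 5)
Your proof is correct and takes essentially the same route as the paper's: both reduce the claim to the vertex level and apply condition~4 (the preimage of $B(i_1(b))$ under $i_1$ is exactly $B(b)$) in contrapositive form. The only difference is that you spell out the edge case explicitly, which the paper leaves implicit since an edge of $G \graphminus H$ is determined by its endpoints surviving the subtraction.
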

\nulp{\begin{fullproof}
\begin{proof}
	Let $v \in i_1[I\graphminus B(b)]$.  Then there is a $v' \in I\graphminus
	B(b)$ such that $v' = i_1(v)$.  $v' \notin B(b)$, so $v' \notin
	i_1^{-1}[B(i_1(b))]$, by requirements of a rewrite pattern.  So $v
	\notin B(i_1(b))$ and hence $v \in L\graphminus B(i_1(b))$.

	The $i_2$ case follows similarly.
\end{proof}
\end{fullproof}}

\nulp{\begin{fullproof}
\begin{lemma}
	\label{lemma:bbox-preimages}
	Let $f : G \rightarrow H$ be a pattern graph morphism, and let $b \in
	!(G)$.  Then $B(b)$ is contained in the preimage of $B(f(b))$ under
	$f$.
\end{lemma}
\begin{proof}
	Let $v \in B(b)$ and let $e$ be the edge in $G$ from $b$ to $v$.  Then
	$f(e)$ is an edge in $H$ from $f(b)$ to $f(v)$, so $f(v)$ is in
	$B(f(b))$.  So $v$ is in the preimage of $B(f(b))$ under $f$.
\end{proof}
\end{fullproof}}

\begin{definition}
	Let $L \rewritesto R$ be a rewrite pattern defined by the span
	$\cspan{L}{i_1}{I}{i_2}{R}$. Let $b,b' \in !(I)$ be mergable
	$!$-vertices such that the pairs of $!$-boxes defined by
	$i_1(b),i_1(b') \in L$ and $i_2(b), i_2(b') \in R$ can also be merged.
	The four !-box operations on pattern graphs have the following
	equivalents on rewrite patterns:
	\begin{description}
	\item $\PCOPY_b(L \rewritesto R)$ is defined by:
	\[ \cspan{\COPY_{i_1(b)}(L)}{i_1'}{\COPY_b(I)}{i_2'}{\COPY_{i_2(b)}(R)} \]
	For $\COPY_b(I)$ and $\COPY_{i_1(b)}(L)$ defined by pushouts:
	\begin{center}
		\posquare{I\graphminus B(b)}{I}{I}{\COPY_b(I)}{\iota}{p_1^I}{\iota}{p_2^I}
		\qquad\qquad
		\posquare{L\graphminus B(i_1(b))}{L}{L}{\COPY_{i_1(b)}(L)}{}{p_1^L}{}{p_2^L}
	\end{center}
	the maps $p_1^L$ and $p_2^L$ agree on $L\graphminus B(i_1(b))$. From
	lemma \ref{lemma:rw-patt-bbox-containment}, we can deduce that $p_1^L
	\circ i_1 \circ \iota = p_2^L \circ i_1 \circ \iota$. We then define $i_1'$ as the map induced by
	the pushout along $I\graphminus B(b)$.
	\begin{equation}
	\label{eq:pcopy}
	\begin{tikzpicture}
	    \matrix(m)[cdiag]{
	      I\graphminus B(b) & I         &                  \\
	      I                & COPY_b(I) & L                \\
	                       & L         & COPY_{i_1(b)}(L) \\
	    };
	    \path [arrs] (m-1-1) edge node {$\iota$} (m-1-2)
	                 (m-1-1) edge node [swap] {$\iota$} (m-2-1)
	                 (m-1-2) edge (m-2-2)
	                 (m-2-1) edge (m-2-2)
	                 (m-2-1) edge node [swap] {$i_1$} (m-3-2)
	                 (m-2-2) edge [dashed] node {$i_1'$} (m-3-3)
	                 (m-1-2) edge node {$i_1$} (m-2-3)
	                 (m-3-2) edge node [swap] {$p_1^L$} (m-3-3)
	                 (m-2-3) edge node {$p_2^L$} (m-3-3);
	    \NWbracket{(m-2-2)}
	\end{tikzpicture}
	\end{equation}
	$i_2'$ is defined similarly.
	\item $\PDROP_b(L \rewritesto R)$ is defined by the span:
	\[ \cspan{\DROP_{i_1(b)}(L)}{i_1'}{\DROP_b(I)}{i_2'}{\DROP_{i_2(b)}(R)} \]
	where $i_1'$ and $i_2'$ are the restrictions of $i_1$ and $i_2$ to $\DROP_b(I)$.
	\item $\PKILL_b(L \rewritesto R)$ is defined similarly:
	\[ \cspan{\KILL_{i_1(b)}(L)}{i_1'}{\KILL_b(I)}{i_2'}{\KILL_{i_2(b)}(R)} \]
	where $i_1'$ and $i_2'$ are again restrictions of $i_1$ and $i_2$.
	\item $\PMERGE_{b,b'}(L \rewritesto R)$ is a span:
	\[ \cspan{\MERGE_{i_1(b),i_1(b')}(L)}{i_1'}{\MERGE_{b,b'}(I)}{i_2'}{\MERGE_{i_2(b),i_2(b')}(R)} \]
	The maps $i_1'$ and $i_2'$ are induced by the coequaliser of $\hat b$ and $\hat b'$.
	\[
	\begin{tikzpicture}
	    \matrix(m)[cdiagsm]{
		& B^\uparrow(b) &   \\
		L & I   & R \\
		\MERGE_{i_1(b),i_1(b')}(L) & \MERGE_{b,b'}(I) & \MERGE_{i_2(b),i_2(b')}(R) \\
	    };
	    \path [arrs] (m-1-2) edge [arrow left] node [swap] {$\hat b$} (m-2-2)
	                 (m-1-2) edge [arrow right] node {$\hat b'$} (m-2-2)
	                 (m-2-2) edge node [swap] {$i_1$} (m-2-1)
	                 (m-2-2) edge node {$i_2$} (m-2-3)
	                 (m-3-2) edge [dashed] node [swap] {$i_1'$} (m-3-1)
	                 (m-3-2) edge [dashed] node {$i_2'$} (m-3-3)
	                 (m-2-1) edge (m-3-1)
	                 (m-2-2) edge (m-3-2)
	                 (m-2-3) edge (m-3-3);
	\end{tikzpicture}
	\]
	\end{description}
\end{definition}

\nulp{\begin{fullproof}
\begin{proposition}
	\label{prop:pcopy-rw-patt}
	Let $\cspan{L}{i_1}{I}{i_2}{R}$ be a rewrite pattern, and
	$\cspan{L'}{i_1'}{I'}{i_2'}{R'}$ be the result of applying $\PCOPY_b$
	to it, where $b \in !(I)$.  Then $\cspan{L'}{i_1'}{I'}{i_2'}{R'}$ is a
	rewrite pattern.
\end{proposition}
\begin{proof}
	It is clear from the definition and theorem \ref{thm:instantiation}
	that this is a span of pattern graphs.
	
	$I'\graphminus !(I')$ is clearly a point graph, as pushouts of graphs
	are graph unions and so any edge between wire vertices in $I'$ must
	come from an edge between wire vertices in $I$, and similarly any node
	vertex in $I'$ must come from a node vertex in $I$.

	Let $v \in \Bound(L')$.  It must be in the image of at least one of
	$p_1^L$ or $p_2^L$.  In either case, its preimage must be in
	$\Bound(L)$, by lemma \ref{lemma:copy-maps-bounds}.  Since $i_1$ is an
	isomorphism between $\omega(I)$ and $\Bound(L)$, it must have
	a preimage in $I$.  Then this preimage must have an image in
	$\COPY_b(I)$, which $i'_1$ must map to $v$ by commutativity of
	\eqref{eq:pcopy}.  So $i'_1$ is surjective onto $\Bound(L')$.

	Now let $v,w \in \omega(I')$, with $i'_1(v) = i'_1(w)$.  If $v$ and
	$w$ are both in the image of $p^I_1$, or both in the image of $p^I_2$,
	with preimages $v'$ and $w'$ respectively, we would have that $i_1(v')
	= i_1(w')$, since $p^L_1$ and $p^L_2$ are monomorphisms.  But $i_1$ is
	also mono, and so we would have $v' = w'$, and hence $v = w$.  So
	suppose $v$ is in the image of $p^I_1$ and $w$ is in the image of
	$p^I_2$, with preimages $v'$ and $w'$ respectively.  Then we know that
	$p^L_1(i_1(v')) = p^L_2(i_1(w')) = i'_1(v)$.  But that means that
	$i'_1(v)$ is in the image of both $p^L_1$ and $p^L_2$, and so is in
	$L\graphminus B(i_1(b))$.  Thus both $i_1(v')$ and $i_1(w')$ are in
	$L\graphminus B(i_1(b))$.  Now if $v'$ were in $B(b)$, there would be
	an edge from $b$ to $v'$ in $I$, and $i_1$ would preserve that edge,
	meaning that $i_1(v')$ would be in $B(i_1(b))$, which it is not.  So
	$v'$ (and, similarly, $w'$) must be in $I\graphminus B(b)$.  But then
	the pushout from $I\graphminus B(b)$ means that we must have $v = w$,
	and hence that $i'_1$ is injective on vertices.  $i'_1$ is therefore
	mono, since pattern graphs are simple.

	We now have that $I' \cong \Bound(L')$ by $i_1$.  The other
	isomorphism we need is $\beta(i_1)$.  So we need to show that
	$\beta(i_1)$ is surjective onto $\beta(L')$ (the fact that $i_1$ is
	mono and preserves vertex types will then give us the isomorphism).
	Since $\beta(L')$ is postel (and, in particular, has a self-loop on
	every vertex), it is sufficient to show that $i_1$ is surjective onto
	edges between $!$-vertices.

	Let $e$ be an edge between $!$-vertices in $L'$.  Then it must be in
	the image of at least one of $p^L_1$ or $p^L_2$.  WLOG, assume it is
	in the image of $p^L_1$, and let $e'$ be its preimage in $L$.  Since
	$\beta(i_1)$ is an isomorphism, there is an edge $e''$ in $I$ between
	$!$-vertices with $i_1(e'') = e'$.  But then $i'_1(p^I_1(e'')) = e$ by
	commutativity of \eqref{eq:pcopy}, and so $e$ is in the image of
	$i'_1$, as required.

	Now let $c \in !(I')$.  We must show that the preimage of $B(i'_1(c))$
	under $i'_1$ is exactly $B(c)$.  Lemma \ref{lemma:bbox-preimages}
	gives us half of what we need.  Let $v$ be a vertex in $I'$.  Then we
	still need to show that if there is an edge from $i'_1(c)$ to
	$i'_1(v)$ in $L'$, there must also be an edge from $c$ to $v$.
	Suppose $e$ is such an edge in $L'$.  Then, as $L'$ is the result of a
	pushout, $e$ must be in the image of either $p^L_1$ or $p^L_2$.  WLOG,
	assume it is in the image of $p^L_1$, and let $e'$ be its preimage in
	$L$.  We have already demonstrated that the target of $c$ must be a
	boundary vertex if it is not a $!$-vertex, and so the same must be
	true of the target of $e'$.  But then $s(e')$ and $t(e')$ are both in
	the image of $i_1$, and $t(e')$ is in $B(s(e'))$.  Let $c'$ be the
	preimage of $s(e')$ and $v'$ the preimage of $t(e')$.  Then we must
	have that $v'$ is in $B(c')$ as $i_1$ is part of a rewrite pattern.
	But now $i'_1(p^I_1(c')) = i'_1(c)$ and so $p^I_1(c') = c$ since
	$i'_1$ is mono, and similarly $p^I_1(v') = v$.  Then $p^I_1$ takes the
	edge from $c'$ to $v'$ in $I$ to an edge from $c$ to $v$ in $I'$,
	which is what we required.  So the preimage of $B(i'_1(c))$ under
	$i'_1$ is exactly $B(c)$.

	These properties also all hold for $i'_2$ (the proofs are symmetric).
	So all that is left is to show that for all $p \in \omega(I')$,
	$i'_1(p)$ is an input (respectively output) of $L'$ if and only if
	$i'_2(p)$ is an input (respectively output) of $R'$.  So let $p \in
	\omega(I')$, and suppose $i'_1(p)$ is an input of $L'$.  Then
	$i'_1(p)$ must be in the image of either $p^L_1$ or $p^L_2$.  WLOG,
	assume it is in the image of $p^L_1$.  Then there must be a $v$ in
	$L$ such that $p^L_1(v) = i'_1(p)$, and this must be an input of $v$.
	Then there must be some $p' \in \omega(I)$ such that $i_1(p') = v$.
	Then, by commutativity and the fact that all the maps are mono,
	$p^I_1(p') = p$.  Now $i_2(p')$ is an input of $R$, and so
	$i'_2(p) = i'_2(p^I_1(p')) = p^R_1(i_2(p'))$ is an input of $R'$, by lemma
	\ref{lemma:copy-maps-bounds}, as required.  The converse and the cases
	for outputs follow similarly.  So $\cspan{L'}{i_1'}{I'}{i_2'}{R'}$ is
	a rewrite pattern.
\end{proof}

\begin{proposition}
	\label{prop:pkill-rw-patt}
	Let $\cspan{L}{i_1}{I}{i_2}{R}$ be a rewrite pattern, and
	$\cspan{L'}{i_1'}{I'}{i_2'}{R'}$ be the result of applying $\PKILL_b$
	to it, where $b \in !(I)$.  Then $\cspan{L'}{i_1'}{I'}{i_2'}{R'}$ is a
	rewrite pattern.
\end{proposition}
\begin{proof}
	The first thing to note is that $i_1'$ and $i_2'$ are well-defined, as
	part 4 of the definition of a rewrite pattern ensures that the image
	of $I \graphminus B(b)$ under $i_1$ is exactly the image of $I$ under
	$i_1$ less $B(i_1(b))$, and similarly for $i_2$.  So the image of
	$i_1$ restricted to $I'$ (considered as a subgraph of $I$) is
	contained within $L'$ (considered as a subgraph of $L$), and similarly
	for $i_2$ and $R'$.  So it makes sense to consider these maps as being
	from $I'$ to $L'$ and $R'$.

	For the same reasons, $\beta(i_1')$ and $\beta(i_2')$ are
	isomorphisms, and the preimages of $B(i_1'(c))$ under $i_1'$ and
	$B(i_2'(c))$ under $i_1'$ must be exactly $B(c)$ for all $c \in
	!(I')$.

	$I'\graphminus !(I')$ is clearly a point graph.

	$i_1'$ is clearly mono, since $i_1$ is.  Let $v \in \Bound(L')$.  If
	we consider $L'$ as a subgraph of $L$, with inclusion map $\iota_L$,
	then $\iota_L(v) \in \Bound(L)$ by \ref{lemma:kill-preserves-bounds}.
	Let $w \in \omega(I)$ be the vertex that maps to $\iota_L(v)$ by
	$i_1$.  We know that $\iota_L(v)$ is not in $B(i_1(b))$ (since then it
	would not be in the image of $\iota_L$), and so $w$ cannot be in
	$B(b)$, by part 4 of the definition of a rewrite pattern.  But then if
	we consider $I'$ as a subgraph of $I$, with inclusion map $\iota_I$,
	$w$ is in the image of $\iota_I$.  Let $w'$ be the vertex in $I'$ that
	maps to $w$ by $\iota_I$.  Then $i_1'(w') = v$, since $i_1'$ is the
	restriction of $i_1$ to the subgraph described by $\iota_I$.  So
	$i_1'$ restricts to a bijection $\omega(I') \cong \Bound(L')$.
	Similarly, $i_2'$ restricts to a bijection $\omega(I') \cong
	\Bound(R')$.

	Let $w' \in \omega(I')$.  If $i_1'(w')$ is an input (resp. output) of
	$L'$, $i_1(\iota_I(w')) = \iota_L(i_1'(w'))$ is an input (resp.
	output) of $L$ (by \ref{lemma:kill-preserves-bounds}).  But then
	$\iota_R(i_2'(w')) = i_2(\iota_I(w'))$ is an input (resp. output) of
	$R$, since $i_1$ and $i_2$ form a rewrite pattern.  Then $i_2'(w')$ is
	an input (resp. output) of $R'$ (by \ref{lemma:kill-preserves-bounds}
	again).  So $L'$ and $R'$ share the same boundary via
	$i'_1\!\!\restriction_{\omega(I')}$ and
	$i'_2\!\!\restriction_{\omega(I')}$.

	Therefore we have that $\cspan{L'}{i_1'}{I'}{i_2'}{R'}$ is a rewrite
	pattern.
\end{proof}

\begin{proposition}
	\label{prop:pmerge-rw-patt}
	Let $\cspan{L}{i_1}{I}{i_2}{R}$ be a rewrite pattern, and
	$\cspan{L'}{i_1'}{I'}{i_2'}{R'}$ be the result of applying
	$\PMERGE_{b,c}$ to it, where $b,c \in !(I)$.  Then
	$\cspan{L'}{i_1'}{I'}{i_2'}{R'}$ is a rewrite pattern.
\end{proposition}
\begin{proof}
	The first thing to check is that $i_1'$ and $i_2'$ exist and are
	unique.
	\[
	\begin{tikzpicture}
	    \matrix(m)[cdiag]{
		B^\uparrow(i_1(b)) & B^\uparrow(b) & B^\uparrow(i_2(b)) \\
		L & I   & R \\
		L' & I' & R' \\
	    };
	    \path [arrs]
		(m-1-2) edge node [swap] {$\cong$} (m-1-1)
		(m-1-2) edge node {$\cong$} (m-1-3)
		(m-1-1) edge [arrow left] node [swap] {$\widehat{i_1(b)}$} (m-2-1)
		(m-1-1) edge [arrow right] node {$\widehat{i_1(b')}$} (m-2-1)
		(m-1-3) edge [arrow left] node [swap] {$\widehat{i_2(b)}$} (m-2-3)
		(m-1-3) edge [arrow right] node {$\widehat{i_2(b')}$} (m-2-3)
		(m-1-2) edge [arrow left] node [swap] {$\widehat b$} (m-2-2)
		(m-1-2) edge [arrow right] node {$\widehat{b'}$} (m-2-2)
		(m-2-2) edge node {$i_1$} (m-2-1)
		(m-2-2) edge node [swap] {$i_2$} (m-2-3)
		(m-3-2) edge [dashed] node [swap] {$i_1'$} (m-3-1)
		(m-3-2) edge [dashed] node {$i_2'$} (m-3-3)
		(m-2-1) edge node [swap] {$q_L$} (m-3-1)
		(m-2-2) edge node [swap] {$q_I$} (m-3-2)
		(m-2-3) edge node {$q_R$} (m-3-3);
	\end{tikzpicture}
	\]

	The top two isomorphisms exist because $i_1$ and $i_2$ are isomorphic
	on the full subgraphs of $!$-vertices of their domain and codomains.
	So these isomorphisms are just restrictions of $i_1$ and $i_2$ to
	$B^\uparrow(b)$.  But then it is clear that the $b$ and $b'$ squares
	at the top of the diagram commute (as four independent commuting
	squares).  So then we have that $q_L$ coequalises $i_1 \circ \widehat
	b$ and $i_1 \circ \widehat{b'}$, and similarly for $q_R$ and $i_2$.
	But this is just the same as saying that $q_L \circ i_1$ and $q_R
	\circ i_2$ each coequalise $\widehat b$ and $\widehat{b'}$.  So $i_1'$
	and $i_2'$ exist and are unique by universality of the coequaliser
	$q_I$.

	As explained in the proof of propsition
	\ref{prop:instantiation-of-merge}, $q_I$ restricts to an isomorphism
	from $\Sigma(I)$ to $\Sigma(I')$, and similarly for $q_L$ and $q_R$.
	So $\Sigma(I')$ must be a point graph, since $\Sigma(I)$ is, and the
	fact that the bottom two squares commute gives us that $i_1'$ and
	$i_2'$ must inherit the property that they share the boundaries of
	$L'$ and $R'$ from $i_1$ and $i_2$.

	If we apply the $\beta$ functor to the above diagram, we note that
	$\beta(i_1)$ and $\beta(i_2)$ are isomorphisms.  This means that we
	can apply the above coequaliser argument to $\beta(i_1)^{-1}$ and
	$\beta(i_2)^{-1}$, and construct unique maps from $\beta(L')$ and
	$\beta(R')$ to $\beta(I')$ that make the bottom squares commute:
	\[
	\begin{tikzpicture}
	    \matrix(m)[cdiag]{
		\beta(L)  & \beta(I)  & \beta(R)  \\
		\beta(L') & \beta(I') & \beta(R') \\
	    };
	    \path [arrs]
		(m-1-1) edge node {$\beta(i_1)^{-1}$} (m-1-2)
		(m-1-3) edge node [swap] {$\beta(i_2)^{-1}$} (m-1-2)
		(m-2-1) edge [dashed] node {$j_1'$} (m-2-2)
		(m-2-3) edge [dashed] node [swap] {$j_2'$} (m-2-2)
		(m-1-1) edge node [swap] {$\beta(q_L)$} (m-2-1)
		(m-1-2) edge node [swap] {$\beta(q_I)$} (m-2-2)
		(m-1-3) edge node {$\beta(q_R)$} (m-2-3);
	\end{tikzpicture}
	\]
	So this gives us that
	\[ \beta(q_I) = j_1' \circ \beta(q_L) \circ \beta(i_1) \]
	and the original commuting diagram gives us
	\[ \beta(q_L) \circ \beta(i_1) = \beta(i_1') \circ \beta(q_I) \]
	so
	\[ \beta(q_I) = j_1' \circ \beta(i_1') \circ \beta(q_I) \]
	and, since $\beta(q_I)$ is a coequaliser and hence epi, we get
	\[ \text{id}_{\beta(I')} = j_1' \circ \beta(i_1') \]
	and a similar argument gives us that
	\[ \text{id}_{\beta(L')} = \beta(i_1') \circ j_1' \]
	and so $j_1'$ witnesses that $\beta(i_1')$ is an isomorphism.
	Similarly, $\beta(i_2')$ is also isomorphic.

	Lemma \ref{lemma:bbox-preimages} gives us half of the remaining
	requirement.  For the rest, let $c \in !(I')$ and $v$ be a vertex in
	$I'$ with $i_1'(v) \in B(i_1'(c))$.  Then there is an edge $e'$ in
	$L'$ from $i_1'(c)$ to $i_1'(v)$.

	$q_I$ and $q_L$ are epi, which is equivalent to surjectivity in
	$\catSPatGraph$.  So $c$ and $v$ are in the image of $q_L$.  Let $d
	\in !(I)$ with $q_I(d) = c$ and $w$ a vertex of $L$ with $q_I(w) = v$.
	Then
	\begin{align*}
		q_L(i_1(d)) &= i_1'(c) \\
		q_L(i_1(w)) &= i_1'(v)
	\end{align*}
	Simlarly, there is an edge $e$ in $L$ with $q_L(e) = e'$.  So $e$
	witnesses that $i_1(w) \in B(i_1(d))$.  So $w$ is in the preimage of
	$B(i_1(d))$ under $i_1$, and hence $w \in B(d)$ by properties of
	rewrite patterns.  The edge in $I$ from $d$ to $w$ maps under $q_I$ to
	an edge from $c$ to $v$, and hence $w \in B(c)$ as required.

	So $\cspan{L'}{i_1'}{I'}{i_2'}{R'}$ is a rewrite pattern.
\end{proof}
\end{fullproof}}

\begin{theorem}
	\label{thm:rw-patt-preserved}
	Let $L \rewritesto R$ be a rewrite pattern. Then applying any of the rewrite !-box operations yields another rewrite pattern.
\end{theorem}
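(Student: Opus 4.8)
The plan is to treat the four rewrite $!$-box operations $\PCOPY$, $\PDROP$, $\PKILL$, and $\PMERGE$ one at a time, since the statement amounts precisely to the claim that each of them sends a rewrite pattern to a span again satisfying the four defining conditions of a rewrite pattern. For every operation, Theorem~\ref{thm:instantiation} already guarantees that the three $\mathcal G_3$-typed graphs occurring in the resulting span are pattern graphs; so the remaining work is to check that the induced span maps $i_1'$ and $i_2'$ are well-defined morphisms and that conditions (1)--(4) in the definition of a rewrite pattern are inherited.

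I would dispatch $\PDROP$ and $\PKILL$ first, as these are the easy cases: in both, the new span maps are just restrictions of $i_1$ and $i_2$. For $\PDROP$ the only change is the deletion of a single $!$-vertex $b$ together with its images, so $\Sigma$ is untouched, the boundary-sharing condition is immediate, and $\beta(i_1'),\beta(i_2')$ remain isomorphisms because corresponding $!$-vertices are deleted on both sides via the bijection induced by $i_1,i_2$. For $\PKILL$ we instead delete $B(b)$ together with $B(i_1(b))$ and $B(i_2(b))$; condition~(4) of the original rewrite pattern guarantees that the preimage of $B(i_1(b))$ under $i_1$ is exactly $B(b)$, so these restrictions land where they should, and openness of the $!$-boxes (deleting an open subgraph creates no new boundary vertices) yields the boundary-sharing condition.

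The substantive cases are $\PCOPY$ and $\PMERGE$, where $i_1',i_2'$ are induced by a pushout and a coequaliser respectively. For $\PCOPY$, Lemma~\ref{lemma:rw-patt-bbox-containment} supplies the compatibility needed to induce $i_1'$ as in diagram~\eqref{eq:pcopy}, and I would then verify each condition by tracking vertices and edges through the two pushout legs $p_1^L,p_2^L$: since pushouts of graphs are unions, $\Sigma(I')$ stays a point graph, while boundary-sharing and the $\beta$-isomorphism condition follow from surjectivity onto boundary vertices and onto edges between $!$-vertices respectively. For $\PMERGE$, I would first construct $i_1',i_2'$ via the universal property of the coequaliser $q_I$, using that $i_1,i_2$ restrict to isomorphisms on the parent subgraphs $B^\uparrow(b)$ so that $q_L\circ i_1$ and $q_R\circ i_2$ coequalise $\hat b,\hat{b'}$; the fact that $\beta(i_1')$ is an isomorphism would then be obtained by running the same coequaliser argument on $\beta(i_1)^{-1}$ to produce an explicit two-sided inverse, exploiting that $\beta(q_I)$ is epi.

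The main obstacle, in both the $\PCOPY$ and $\PMERGE$ cases, will be re-establishing condition~(4): that the preimage of $B(i_1'(c))$ under $i_1'$ is exactly $B(c)$ for every new $!$-vertex $c$ (and symmetrically for $i_2'$). The containment $B(c)\subseteq i_1'^{-1}[B(i_1'(c))]$ is formal, since any morphism sends an edge out of a $!$-vertex to an edge out of its image; the reverse containment is where the effort lies. There one must take an edge out of $i_1'(c)$ in $L'$ (or $R'$), lift it through the appropriate pushout leg or coequaliser preimage to an edge of $L$ (or $R$), invoke condition~(4) for the \emph{original} rewrite pattern, and then push the resulting $!$-box edge of $I$ forward to an edge of $I'$ out of $c$. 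Managing the case analysis on which leg an edge arises from (for $\PCOPY$) and on whether a $!$-vertex has a single preimage or the two-element preimage $\{b,b'\}$ (for $\PMERGE$) is the delicate bookkeeping that carries the argument through.
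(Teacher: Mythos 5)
Your proposal takes essentially the same route as the paper's own proof: the paper likewise dispatches $\PDROP$ as trivial, handles $\PKILL$ via condition~(4) of the original pattern plus openness of $!$-boxes, constructs $i_1'$ and $i_2'$ for $\PCOPY$ and $\PMERGE$ through the pushout and coequaliser universal properties (including the same epi-cancellation argument producing a two-sided inverse for $\beta(i_1')$ in the merge case), and re-establishes condition~(4) by exactly the edge-lifting case analysis on pushout legs and on single versus $\{b,b'\}$ preimages that you describe. The proposal is correct, and the bookkeeping you flag as the main obstacle is precisely where the paper's detailed propositions spend their effort.
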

\nulp{\begin{fullproof}
\begin{proof}
	This follows immediately from the above propositions (the $\PDROP$
	case is trivial).
\end{proof}
\end{fullproof}}

From this result and the definition of rewrite !-box operations above, we can
see that, given matching of $L$ against a string graph $G$ at $m$ under
instantiation $S$, applying the equivalent instantiation sequence to the
rewrite pattern $L \rewritesto R$ will produce a rewrite rule that can be used
to rewrite $G$ to another string graph $H$.  In this way, a single rewrite
pattern can take the place of an infinite family of rewrite rules.

\section{Conclusions and future work} 
\label{sec:conclusion}

We have presented a construction for expressing graphs with a certain form of
repetitive structure, as might be informally expressed with ellipses.  This
pattern graph construction has been made in the language of typed graphs,
allowing the application of familiar techniques for reasoning about graphs.
We have demonstrated how it can be used to express rules that appear in
graphical calculi for quantum information processing.

We have also demonstrated how pattern graphs can be used to rewrite string
graphs, and hence how they allow infinitary families of rules to be used when
reasoning mechanically about string diagrams.

We already have a piece of software,
Quantomatic\footnote{\url{http://sites.google.com/site/quantomatic}}, that
implements a restricted version of pattern graphs, and we are currently
extending it to leverage nested and overlapping $!$-boxes.  The na\"ive
algorithm for matching is quite inefficient, and there should be some gains to
be made by making use of the inherent graph symmetries that arise from copying
$!$-boxes.

An obvious next step is to explore how pattern graphs can be rewritten
directly using rewrite patterns, which would allow us to reason by rewriting
about infinite families of graphs simultaneously. In particular, the notions
of pattern graph matching and unification could be applied to perform
Knuth-Bendix completion~\cite{knuth-bendix}, which could be used in
combination with rules generated by other automated means (e.g., conjecture
synthesis~\cite{KissingerCosy2012}) to generate new pattern graph rewrite
rules~\cite{aleksThesis}.

Another way this work can be extended is to develop ways to express richer
families of string graphs. Pattern graphs can be thought of as something
akin to regular expressions, sans alternation. What sorts of families
can we express using analogues to full regular, context-free,
or recursive languages? For example, could such a language effectively
represent things like chains of unbounded length?
\ctikzfig{kissinger_chain-graph}


Another question one might ask is how pattern graphs can be applied to
study more general graph rewriting problems, rather than just rewriting
for string graphs. In this case, many of the concepts of this paper,
with the exception of ``open subgraphs'', translate straightforwardly
to arbitrary typed graphs.


\bibliographystyle{eptcs}
\bibliography{kissinger}

\end{document}